\def \N{{\mathbb N}}
\def \R{{\mathbb R}}
\def \1{{\mathbb 1}}
\newtheorem{Exemp}{Examples}
\newtheorem{Thm}{Theorem}
\newtheorem{Prop}{Proposition}
\newtheorem{Def}{Definition}
\newtheorem{Lem}{Lemma}
\newtheorem{Def Nota}{Definitions and notations} 
\newtheorem{Cor}{Corollary}
\font\ninerm=cmr9
\long\outer\def\abstract#1{\bigskip\vbox{\noindent\ninerm
\baselineskip=10pt#1}\nobreak\bigskip}
\def\exo#1{\advance\numero by 1\bigskip
{\noindent\tenbf #1\the\numero. }}
\def\frac#1#2{{#1\over #2}}
\title{A convex extension of lower semicontinuous functions defined on normal Hausdorff space.}   
\author{Mohammed Bachir}
\begin{document}
\maketitle
\begin{center} {\it Laboratoire SAMM 4543, Universit\'e Paris 1 Panth\'eon-Sorbonne, Centre P.M.F. 90 rue Tolbiac 75634 Paris cedex 13}
\end{center}
\begin{center} 
{\it Email : Mohammed.Bachir@univ-paris1.fr}
\end{center}
\noindent\textbf{Abstract.} We prove that, any problem of minimization of proper lower semicontinuous function defined on a normal Hausdorff space, is canonically equivalent to a problem of minimization of a proper weak$^*$ lower semicontinuous convex function defined on a weak$^*$ convex compact subset of some dual Banach space. We estalish the existence of an bijective operator between  the two classes of functions which preserves the problems of minimization.
%
\vskip5mm
\noindent {\bf Keywords: } Isomorphism, minimization problem, convex functions, normal Hausdorff space, the Stone-\v{C}ech compactification.

\noindent {\bf Msc: } 47N10, 46N10, 46E15

\tableofcontents

\section{Introduction.}




Let $(X,\tau)$ be a completely regular and Hausdorff space. By $(C_b(X),\|.\|)$ we denote the Banach space of all real-valued, bounded and continuous functions, equipped with the sup-norm. The continuous Dirac map is defined by 

\begin{eqnarray*}
\Delta : (X,\tau) &\longrightarrow& (\Delta(X),w^*)\subset (B_{(C_b(X))^*},w^*)\\
         x &\mapsto & \Delta(x)
\end{eqnarray*}
where, $(B_{(C_b(X))^*},w^*)$ is the closed unit ball of the dual space $(C_b(X))^*$ equipped with the weak$^*$ topology and $\Delta(x) : C_b(X)\longrightarrow \R$ is the linear continuous map defined by $\Delta(x)(\varphi)=\varphi(x)$ for all $x\in X$ and all $\varphi\in C_b(X)$. It is well know that $\Delta$ is a homeomorphism from $(X,\tau)$ onto $(\Delta(X),w^*)$, it is also well know that the set $\overline{\Delta(X)}^{w^*}$ coincides (up to homeomorphism) with the Stone-\v{C}ech compactification $\beta X$. The Stone-\v{C}ech compactification $\beta X$, has the property that every continuous function $\varphi$ from  $X$ into a compact Hausdorff space $K$, has a unique extension to a continuous function $\beta \varphi$ from $\beta X$ into $K$. For more informations about the Stone-\v{C}ech compactification, we refer to \cite{W}. 

\vskip5mm

We are interested in this paper, on a canonical extension of bounded from below lower semicontinuous function $f$ from $X$ into $\R$ to a bounded from below extended convex weak$^*$ lower semicontinuous function $\mathcal{T}(f)$ defined on the convex weak$^*$ compact set $\overline{\textnormal{co}}^{w^*}(\Delta(X))$ (the weak$^*$ closed convex hull of $\Delta(X)$, which can be seen as the convexification of the Stone-\v{C}ech compact $\beta X$). To prove the existence of a canonical extension of any bounded from below lower semicontinuous function $f$ from $X$ into $\R$, we need to assume that $X$ is a normal Hausdorff space. Recall that a topological space $X$ is a normal space if, given any disjoint closed sets $E$ and $F$, there are neighbourhoods $U$ of $E$ and $V$ of $F$ that are also disjoint. The tools that will be used for this purpose, are based on a non convex analogue to Fenchel duality introduced in \cite{Ba}.

\vskip5mm
If $A$ is a subset of $C_b(X)$, we denote by $\sigma_A$ the support function, defined on the dual space by 
\begin{eqnarray*}
\forall Q\in (C_b(X))^*;\hspace{2mm} \sigma_A(Q):=\sup_{\varphi\in A} \langle Q, \varphi\rangle.
\end{eqnarray*}

\begin{Def} A nonempty subset $A$ of $C_b(X)$ is said to be a $\Delta$-set, if and only if, for all $x\in X$ there exist a real number $\lambda_x\in \R$, such that 
\begin{eqnarray*} 
A &=& \cap_{x\in X} \lbrace \varphi \in C_b(X)/ \langle \Delta(x),\varphi\rangle\leq \lambda_x\rbrace\\
  &:=& \cap_{x\in X} \lbrace \varphi \in C_b(X)/ \varphi(x)\leq \lambda_x\rbrace.
\end{eqnarray*} 
\end{Def} 
A $\Delta$-set of $C_b(X)$ is necessarily a closed convex subset of $C_b(X)$. We denote by $SCI(X)$ the set of all real-valued functions $f: X \longrightarrow \R$, bounded from below and lower semicontinuous and by $\Gamma(\overline{\textnormal{co}}^{w^*}(\Delta(X))$ we denote the set of all convex weak$^*$ lower semicontinuous functions that are the restrictions to $\overline{\textnormal{co}}^{w^*}(\Delta(X))$ of the functions $\sigma_A$, where $A$ is a $\Delta$-set of $C_b(X)$:
\begin{eqnarray*}
\Gamma(\overline{\textnormal{co}}^{w^*}(\Delta(X))):=\lbrace (\sigma_A)_{|\overline{\textnormal{co}}^{w^*}(\Delta(X))}/ A \textnormal{ is a } \Delta\textnormal{-set of } C_b(X)\rbrace. 
\end{eqnarray*}

It is easy to see that $SCI(X)$ is a convex cone. The aim of this paper is to prove the following result.

\begin{Thm} \label{Iso-cone} Let $X$ be a normal Hausdorff space. Then, the set $\Gamma(\overline{\textnormal{co}}^{w^*}(\Delta(X)))$ is a convex cone and there exists an isomorphism of convex cone $\mathcal{T}: SCI(X)\longrightarrow \Gamma(\overline{\textnormal{co}}^{w^*}(\Delta(X)))$ i.e. $\mathcal{T}$ is bijective and for all $f, g \in SCI(X)$ and all $\alpha,\beta \in \R^+$, we have $$\mathcal{T}(\alpha f+\beta g)=\alpha\mathcal{T}(f)+ \beta \mathcal{T}(g).$$ This isomorphism satisfies also the following properties.

$(1)$ For all $f \in SCI(X)$, $\mathcal{T}(f)\circ \Delta =f$, this means that $\mathcal{T}(f)$ is a convex weak$^*$ lower semicontinuous extension of $f$ to $\overline{\textnormal{co}}^{w^*}(\Delta(X))$, up to the identification between $X$ and $\Delta(X)$.

$(2)$ The convex cone isomorphime $\mathcal{T}$ is isotone i.e. for all $f, g \in SCI(X)$, we have that 
$$ f \leq g \Longleftrightarrow   \mathcal{T}(f) \leq \mathcal{T}(g)$$

$(3)$ For all bounded continuous function $\varphi\in C_b(X)$ and all $Q\in \overline{\textnormal{co}}^{w^*}(\Delta(X))$, we have $$\mathcal{T}(\varphi)(Q)=\langle Q, \varphi \rangle :=\hat{\varphi}(Q).$$
This means that for all $\varphi\in C_b(X)$, the function $\mathcal{T}(\varphi)$ is affine and weak$^*$ continuous on $\overline{\textnormal{co}}^{w^*}(\Delta(X))$.

$(4)$ For all $f \in SCI(X)$, we have $$\inf_X f = \min_{\overline{\Delta(X)}^{w^*}}\mathcal{T}(f)=\min_{\overline{\textnormal{co}}^{w^*}(\Delta(X))}\mathcal{T}(f).$$

\end{Thm}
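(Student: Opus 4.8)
\emph{The construction.} The plan is to write $\mathcal{T}$ down explicitly. For $f\in SCI(X)$ put
$$A_f:=\bigl\{\varphi\in C_b(X):\varphi(x)\le f(x)\text{ for all }x\in X\bigr\}=\bigcap_{x\in X}\bigl\{\varphi\in C_b(X):\varphi(x)\le f(x)\bigr\}.$$
Since $f$ is bounded below, the constant function equal to $\inf_X f$ belongs to $A_f$, so $A_f$ is a nonempty $\Delta$-set (with $\lambda_x=f(x)$), and I set $\mathcal{T}(f):=(\sigma_{A_f})_{|\overline{\textnormal{co}}^{w^*}(\Delta(X))}$, which is convex and weak$^*$ lower semicontinuous as a supremum of the weak$^*$-continuous affine maps $Q\mapsto\langle Q,\varphi\rangle$; hence $\mathcal{T}(f)\in\Gamma(\overline{\textnormal{co}}^{w^*}(\Delta(X)))$. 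Property $(1)$ is the first thing to establish, and it is where normality is used: the inequality $\sigma_{A_f}(\Delta(x))\le f(x)$ is immediate from the definition of $A_f$, and for the converse I fix $x_0$ and $\varepsilon>0$, observe that $U:=\{x:f(x)>f(x_0)-\varepsilon\}$ is an open neighbourhood of $x_0$ by lower semicontinuity, and invoke Urysohn's lemma (valid because $X$ is normal and $\{x_0\}$ is closed) to get a continuous $u:X\to[0,1]$ with $u(x_0)=1$ and $u\equiv0$ on $X\setminus U$; then $\varphi:=\inf_Xf+\bigl(f(x_0)-\varepsilon-\inf_Xf\bigr)^+u$ lies in $A_f$ and satisfies $\varphi(x_0)\ge f(x_0)-\varepsilon$. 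Letting $\varepsilon\to0$ gives $\mathcal{T}(f)\circ\Delta=f$.

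\emph{Bijectivity.} Injectivity is immediate from $(1)$: $\mathcal{T}(f)=\mathcal{T}(g)$ forces $f=\mathcal{T}(f)\circ\Delta=\mathcal{T}(g)\circ\Delta=g$. For surjectivity, let $A=\bigcap_{x\in X}\{\varphi:\varphi(x)\le\lambda_x\}$ be an arbitrary $\Delta$-set and set $h(x):=\sigma_A(\Delta(x))=\sup_{\varphi\in A}\varphi(x)$. Then $h$ is lower semicontinuous (a supremum of continuous functions), bounded below (pick any $\varphi_0\in A$), and finite since $h(x)\le\lambda_x$; thus $h\in SCI(X)$. Moreover $A=A_h$: if $\varphi\in A$ then $\varphi\le h$ pointwise, so $A\subseteq A_h$, and if $\varphi\le h$ then $\varphi(x)\le h(x)\le\lambda_x$ for every $x$, so $A_h\subseteq A$. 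Hence $\mathcal{T}(h)=(\sigma_A)_{|\overline{\textnormal{co}}^{w^*}(\Delta(X))}$ and $\mathcal{T}$ is onto $\Gamma(\overline{\textnormal{co}}^{w^*}(\Delta(X)))$.

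\emph{The cone structure, and the main obstacle.} Positive homogeneity is routine: $A_{\alpha f}=\alpha A_f$ for $\alpha>0$ gives $\sigma_{A_{\alpha f}}=\alpha\sigma_{A_f}$, and for $\alpha=0$ one uses that every $Q\in\overline{\textnormal{co}}^{w^*}(\Delta(X))$ is a positive functional with $\langle Q,\uno\rangle=1$ (true on $\Delta(X)$ and preserved under convex combinations and weak$^*$ limits), whence $\sigma_{A_0}\equiv0$ there. The delicate point is additivity: since support functions obey $\sigma_{A_f}+\sigma_{A_g}=\sigma_{A_f+A_g}$, it suffices to prove $A_{f+g}=A_f+A_g$, and only the inclusion $A_{f+g}\subseteq A_f+A_g$ is nontrivial. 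Given $\psi\in C_b(X)$ with $\psi\le f+g$, the function $\psi-g$ is upper semicontinuous and bounded above, so $\psi-g\le\min(f,C)$ for a suitable constant $C$, where $\min(f,C)$ is lower semicontinuous and bounded; the Kat\v{e}tov--Tong insertion theorem — which holds precisely because $X$ is normal — produces a continuous $\varphi$ with $\psi-g\le\varphi\le\min(f,C)\le f$, and then $\widetilde\varphi:=\max(\varphi,\inf_Xf)$ belongs to $A_f$, $\psi-\widetilde\varphi$ belongs to $A_g$, and $\psi=\widetilde\varphi+(\psi-\widetilde\varphi)$. I expect this insertion step to be the main obstacle: Urysohn's lemma alone does not suffice, one needs the full strength of normality, and the truncations must be arranged so that all functions stay bounded and continuous. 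Granting additivity and homogeneity, $\Gamma(\overline{\textnormal{co}}^{w^*}(\Delta(X)))=\mathcal{T}(SCI(X))$ is a convex cone and $\mathcal{T}$ is an isomorphism of convex cones.

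\emph{The remaining properties.} For $(2)$, $f\le g$ gives $A_f\subseteq A_g$, hence $\sigma_{A_f}\le\sigma_{A_g}$ and $\mathcal{T}(f)\le\mathcal{T}(g)$; the converse is immediate from $(1)$. For $(3)$, any $\varphi\in C_b(X)$ lies in $SCI(X)$, and for $Q\in\overline{\textnormal{co}}^{w^*}(\Delta(X))$ positivity of $Q$ yields $\langle Q,\psi\rangle\le\langle Q,\varphi\rangle$ for every $\psi\le\varphi$, while $\varphi\in A_\varphi$; hence $\mathcal{T}(\varphi)(Q)=\sigma_{A_\varphi}(Q)=\langle Q,\varphi\rangle=\hat\varphi(Q)$, which is affine and weak$^*$ continuous. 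For $(4)$, evaluating $\sigma_{A_f}$ against the constant $\inf_Xf\in A_f$ and using $\langle Q,\uno\rangle=1$ shows $\mathcal{T}(f)\ge\inf_Xf$ on all of $\overline{\textnormal{co}}^{w^*}(\Delta(X))$, and by $(1)$ this lower bound is attained at every point of $\Delta(X)\subseteq\overline{\Delta(X)}^{w^*}$; since $\mathcal{T}(f)$ is weak$^*$ lower semicontinuous and both $\overline{\Delta(X)}^{w^*}$ and $\overline{\textnormal{co}}^{w^*}(\Delta(X))$ are weak$^*$ compact, the infimum is attained as a minimum on each, giving $\inf_Xf=\min_{\overline{\Delta(X)}^{w^*}}\mathcal{T}(f)=\min_{\overline{\textnormal{co}}^{w^*}(\Delta(X))}\mathcal{T}(f)$.
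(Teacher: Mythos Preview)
Your proof is correct, and your map $\mathcal{T}(f)=(\sigma_{A_f})_{|\overline{\textnormal{co}}^{w^*}(\Delta(X))}$ is the same as the paper's (Proposition~\ref{Prop2} shows $(f^{\times})^*=\sigma_{A_Y(f)}$ on $\overline{\textnormal{co}}^{w^*}(\Delta(X))$), but you reach the conclusions by a shorter and more elementary route. The paper builds $\mathcal{T}$ as the Fenchel transform of the nonconvex conjugate $f^{\times}$ and derives additivity through the chain $(f+g)^{\times}=f^{\times}\diamond g^{\times}$ (Theorem~\ref{inf-conv}, whose proof uses Lemma~\ref{lem2} and a minimax theorem) followed by $(h\diamond k)^*=h^*+k^*$; you bypass all of this by proving $A_{f+g}=A_f+A_g$ directly from Kat\v{e}tov--Tong (this is the paper's Lemma~\ref{lem1}) and then invoking the trivial identity $\sigma_{A+B}=\sigma_A+\sigma_B$. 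Likewise, for the reverse implication in the isotone property the paper appeals to Fenchel--Moreau applied to $f^{\times}$ and $g^{\times}$, whereas you get it in one line from property~(1). Your treatment of the insertion step is in fact more careful than the paper's: you truncate above by $\min(f,C)$ and below by $\max(\varphi,\inf_X f)$ to guarantee that the inserted function lands in $C_b(X)$, a point the paper glosses over. The only minor slip is the phrase ``this lower bound is attained at every point of $\Delta(X)$'' in part~(4); you mean that $\inf_X f$ is the infimum of $\mathcal{T}(f)$ over $\Delta(X)$ (via~(1)), not that it is the value at every point --- but the argument that follows is sound. What the paper's approach buys is the connection to the duality framework of \cite{Ba} and the inf-convolution identity of independent interest; what your approach buys is a self-contained proof that avoids the minimax theorem and Fenchel--Moreau entirely.
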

\vskip5mm

In the last part of the above theorem, the weak$^*$ convex lower semicontinuous function $\mathcal{T}(f)$ always attains its minimum on $\overline{\Delta(X)}^{w^*}$ since it is a compact set, this is not the case in general for $f$. From the parts $(1)$ and  $(4)$ of the above theorem, we get that a point $x\in X$ is a minimum for a lower semicontinuous function $f$ iff $\Delta(x)$ is a minimum for the convex weak$^*$ lower semicontinuous function $\mathcal{T}(f)$. More generally, if $Y\subset C_b(X)$ is a class of perturbations, then from the fact that $\mathcal{T}$ is a cone isomorphism we have that $\mathcal{T}(f+\varphi)=\mathcal{T}(f)+\mathcal{T}(\varphi)$. Using part $(3)$, we get that $\mathcal{T}(f+\varphi)=\mathcal{T}(f)+ \hat{\varphi}$. Thus, from parts $(1)$ and $(4)$, we have that for each $\varphi \in Y$, $f+\varphi$ has a minimum at some point $x\in X$ iff $\mathcal{T}(f)+ \hat{\varphi}$ has a minimum at $\Delta(x)$. This shows that a non linear variational principle for lower semicontinuous functions $f$ defined on normal Hausdorff space is equivalent to a linear variational principle for convex weak$^*$ lower semicontinuous functions $\mathcal{T}(f)$ defined on a convex weak$^*$ compact subset of some dual Banach space. The just mentioned remark can be seen as a linearization of the Deville-Godefroy-Zizler variational principle \cite{DGZ}, \cite{DGZbook}, \cite{DR}.
\vskip5mm
This main result will be proved at the end of this note when preliminary results are established in the next sections. These preliminary results are of interest in themselves.
\section{Duality and linearization results.}
Let $X$ be a topological space and $C(X)$ the space of all real-valued continuous functions on $X$. Let $Y$ be a non empty subset of $C(X)$. Let $f: X\longrightarrow \R\cup \lbrace +\infty\rbrace$ be a function such that $dom(f):=\lbrace x\in X/ f(x)< +\infty \rbrace \neq \emptyset$. By $A_Y(f)$ we denote the set  
$$A_Y(f):=\lbrace \varphi\in Y/ \varphi \leq f\rbrace.$$
We introduced in \cite{Ba} a non convex analogue to Fenchel duality, where relations between well-posedness and differentiability was established. We recall that the conjugate of $f$ depending on the class of functions $Y$ is defined as follows:  for all $\varphi \in Y$,
 $$ f^{\times} (\varphi):=\sup_{x\in X} \lbrace \varphi(x) - f(x)\rbrace.$$ 
Note that $dom(f^{\times}) \neq \emptyset$ if and only if, there exists a real number $c\in \R$ such that $A_Y(f+c)\neq \emptyset$, this condition is satisfed in particular if $f$ is bounded from below.  
The second conjugate of $f$ is defined on $X$ as follows: for all $x\in X$
 $$ f^{\times\times} (x):=\sup_{\varphi\in Y} \lbrace \varphi(x) - f^{\times}(\varphi)\rbrace.$$ 
Note that we always have that $f^{\times\times} \leq f$. 

\subsection{Conjugacy of lower semicontinuous function.}

If the class $Y$ is a vector subspace of $C(X)$, then $f^{\times}$ is a convex function on $Y$ as a supremum of affine maps on $Y$. If moreover, $Y$ is a vector subspace of $C_b(X)$ (the space of all real-valued bounded and continuous functions), then it is easy to see that $f^{\times}$ is a convex and $1$-Lipschitz map for the norm $\|.\|_{\infty}$, for every bounded from below function$f$. But in general $f^{\times\times}$ is not convex on $X$ even if $X$ is a vector space. Actually, we get in Theorem \ref{dual1} that under general conditions on the pair $(X,Y)$, we have that $f$ is lower semicontinuous function if and only if, $f^{\times\times}=f$. This result was initially obtained in [Theorem 1, \cite{Ba}], when $X$ is a metric space and $Y$ is a subspace of $C_b(X)$ containing a bump function (a bounded function on $X$ with a nonempty support). In fact, this result is true in a more general setting with essentially the same proof. Since we need to use this result in a general topological space, we give its proof in its general setting. 

\begin{Def} Let $X$ be a topological space and $Y$ be a nonempty subset of $C(X)$. We say that the pair $(X,Y)$ satisfies the property $(H)$ if and only if, for each $x\in X$ and each open neighborhood $U$ of $x$, there exists $\sigma  : X\longrightarrow [0,1]$ such that $\sigma \in Y$, $\sigma (x)=1$ and $\sigma (y)=0$ for all $y \in X\setminus U$.
\end{Def}


\begin{Exemp} \label{Exemp1}
We have the following examples.

$(1)$ If $X$ is a normal Hausdorff space, then $(X,C_b(X))$ satisfies $(H)$ tanks to the Urysohn's lemma.

$(2)$ If $(X,d)$ is a metric space, then it is easy to see that $(X,Lip_b(X))$ satisfies $(H)$, where $Lip_b(X)$ denotes the space of all real-valued  bounded and Lipschitz map on $X$.

$(3)$ If $X$ is a Banach space having a $k$-times ($k \in \N \cup \lbrace +\infty\rbrace $) continuously differentiable and uniformly bounded bump function (see \cite{DGZbook} for examples of Banach spaces having this property), then of course $(X,C_b^k(X))$ satisfies $(H)$, where  $C_b^k(X)$ denotes the space of all $k$-continuously differentiable and uniformly bounded functions on $X$.
\end{Exemp}

\begin{Thm} \label{dual1} Let $X$ be a topological space and $Y$ be a cone included in $C(X)$. Suppose that the pair $(X,Y)$ satisfies the hypothesis $(H)$. Let $f: X \longrightarrow \R \cup \lbrace +\infty \rbrace$ be a function with $dom(f)\neq \emptyset$ and such that $A_Y(f)\neq \emptyset$. Then, $f$ is lower semi-contiuous if and only if $f^{\times\times}= f$.
\end{Thm}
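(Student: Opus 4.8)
The plan is to prove the two implications separately. For ``$f^{\times\times}=f \Rightarrow f$ lower semicontinuous'' the argument is soft and uses only the definitions. Since $A_Y(f)\neq\emptyset$, I would choose $\varphi_0\in A_Y(f)$; then $f^{\times}(\varphi_0)=\sup_{x\in X}(\varphi_0(x)-f(x))\leq 0$, so $\mathrm{dom}(f^{\times})\neq\emptyset$. For every $\varphi\in\mathrm{dom}(f^{\times})$ the map $x\mapsto\varphi(x)-f^{\times}(\varphi)$ is a real-valued continuous function on $X$, whereas for $\varphi\notin\mathrm{dom}(f^{\times})$ it is identically $-\infty$ and hence irrelevant in the supremum. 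Therefore $f^{\times\times}=\sup\{\varphi-f^{\times}(\varphi):\varphi\in\mathrm{dom}(f^{\times})\}$ is a pointwise supremum of continuous functions, hence lower semicontinuous; and since $f=f^{\times\times}$, the function $f$ is lower semicontinuous. This half uses neither hypothesis $(H)$ nor the cone structure of $Y$.

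For the converse I would assume $f$ lower semicontinuous and, recalling that $f^{\times\times}\leq f$ always holds, show $f^{\times\times}(x_0)\geq f(x_0)$ for an arbitrary $x_0\in X$. Fix $\varphi_0\in A_Y(f)$. If $\varphi_0(x_0)=f(x_0)$, then $f^{\times\times}(x_0)\geq\varphi_0(x_0)-f^{\times}(\varphi_0)\geq\varphi_0(x_0)=f(x_0)$ and we are done, so assume $\varphi_0(x_0)<f(x_0)$. Fix a real number $r$ with $\varphi_0(x_0)<r<f(x_0)$ (any real $r>\varphi_0(x_0)$ when $f(x_0)=+\infty$) and fix $\delta>0$. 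Because $f$ is lower semicontinuous the set $\{x:f(x)>r\}$ is open, because $\varphi_0$ is continuous the set $\{x:\varphi_0(x)<\varphi_0(x_0)+\delta\}$ is open, and both contain $x_0$; let $W$ be their intersection, an open neighbourhood of $x_0$. Applying property $(H)$ to $x_0$ and $W$ produces $\sigma\in Y$ with $\sigma:X\to[0,1]$, $\sigma(x_0)=1$ and $\sigma\equiv 0$ on $X\setminus W$. Put $t:=r-\varphi_0(x_0)>0$ and $\varphi:=\varphi_0+t\sigma\in Y$ (here one uses that $Y$ is a cone, stable under addition and under multiplication by positive scalars). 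Then $\varphi(x_0)=\varphi_0(x_0)+t=r$; outside $W$ we have $\varphi=\varphi_0\leq f$; and inside $W$ we have $\varphi\leq\varphi_0+t<(\varphi_0(x_0)+\delta)+t=r+\delta<f+\delta$, since $f>r$ there. Consequently $f^{\times}(\varphi)=\sup_{x\in X}(\varphi(x)-f(x))\leq\delta$, whence $f^{\times\times}(x_0)\geq\varphi(x_0)-f^{\times}(\varphi)\geq r-\delta$. Letting $\delta\downarrow 0$ gives $f^{\times\times}(x_0)\geq r$, and letting $r\uparrow f(x_0)$ gives $f^{\times\times}(x_0)\geq f(x_0)$, as required.

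I expect the delicate point to be the choice of the neighbourhood $W$ in the construction of $\varphi$. If $W$ were chosen merely so that $f>r$ on it, the perturbation $\varphi_0+t\sigma$ could exceed $f$ by an uncontrolled amount at points of $W$ where $\varphi_0$ is already close to $f$, and then $f^{\times}(\varphi)$ would be large. Inserting the extra restriction $\varphi_0<\varphi_0(x_0)+\delta$, legitimate by continuity of $\varphi_0$, is precisely what caps the overshoot by $\delta$ while keeping $\varphi(x_0)$ as large as $r$; this is the step that has to be arranged carefully. Everything else is routine bookkeeping: checking $\mathrm{dom}(f^{\times})\neq\emptyset$, disposing of the degenerate case $\varphi_0(x_0)=f(x_0)$, and treating the value $f(x_0)=+\infty$.
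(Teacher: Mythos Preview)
Your proof is correct. The overall architecture matches the paper's --- the easy implication via ``supremum of continuous functions is lower semicontinuous'', the hard one by producing a test function built from a bump $\sigma$ supplied by property $(H)$ --- but the constructions differ. The paper first passes to the nonnegative auxiliary function $g:=f-\varphi_0\geq 0$ and then, for $a<g(x)$, uses the pure bump $\varphi_x:=(g(x)-\inf_X g)\,\sigma$ supported in $\{g>a\}$; a case split $y\in U$ versus $y\notin U$ gives $g^{\times}(\varphi_x)\leq \varphi_x(x)-a$, and the reduction is undone at the end via $g^{\times\times}=f^{\times\times}-\varphi_0$. You instead work directly with $f$ and use $\varphi=\varphi_0+t\sigma$, which forces you to shrink the neighbourhood further by the continuity condition $\varphi_0<\varphi_0(x_0)+\delta$; this extra shrinking is exactly what the paper avoids by subtracting $\varphi_0$ first. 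Your route has the advantage of treating the case $f(x_0)=+\infty$ transparently (the paper's coefficient $g(x)-\inf_X g$ is formally undefined there and one must replace $g(x)$ by a large finite number), while the paper's route needs only nonnegative scalar multiples of $\sigma$ in the test function rather than sums in $Y$.
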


\begin{proof} The "only if" part follows from the definition of $f^{\times\times}$ and the fact that, the supremum of continuous functions is a lower semicontinuous function. Let us prove the "if" part. Since $A_Y(f)\neq \emptyset$, there exists $\varphi_0\in Y$ such that $\varphi_0\leq f$. Set $g:=f-\varphi_0 \geq 0$ and let us proof that $g^{\times\times}=g$. Indeed, let $x \in X$ and take any real number $a$ such that $a<g (x)$. We prove that $a\leq g^{\times\times}(x)$. Indeed, since $g$ is lower semicontinuous, there exists an open neighborhood $U$ of $x$ such that $a< g(y)$ for all $y \in U$. From the hypothesis $(H)$, there exists a continuous function $\sigma  : X\longrightarrow [0,1]$ such that $\sigma \in Y$, $\sigma (x)=1$ and $\sigma (y)=0$ for all $y \in X\setminus U$. We define $\varphi_x(y):=(g(x)-\inf_X g)\sigma (y)$ for all $y\in X$. Clearly, $\varphi_x\in Y$ for all $x\in X$, since $Y$ is a cone and $\sigma \in Y$. By separately examining the case where $y\in U$ and $y\in X\setminus U$, we can easily verify that $$\varphi_x(y)- g(y)< \varphi_x(x)-a.$$
Taking the supremum over $y\in X$, we obtain that $g^{\times}(\varphi_x) \leq \varphi_x(x)-a$. Thus,we obtain $a\leq \varphi_x(x)-g^{\times}(\varphi_x)\leq g^{\times\times}(x)$. This proves that $a\leq g^{\times\times}(x)$ for all $a< g(x)$. Hence $g(x)\leq g^{\times\times}(x)$ for all $x\in X$. On the other hand, it follows from the definition of the second conjugacy that $g^{\times\times}(x)\leq g(x)$ for all $x\in X$. Thus, $g^{\times\times}(x)= g(x)$ for all $x\in X$. Now, replacing $g$ by $f-\varphi_0$, we get that $g^{\times\times}=f^{\times\times}-\varphi_0$. So we obtain that $f^{\times\times}-\varphi_0=f-\varphi_0$ i.e. $f^{\times\times}=f$.

\end{proof}
 
\begin{Cor} \label{cor1} Let $X$ be a topological space and $Y$ be a cone including in $C(X)$ containing the constants. Suppose that the pair $(X,Y)$ satisfies $(H)$. Let $f: X \longrightarrow \R \cup \lbrace +\infty \rbrace$ be a fuction with $dom(f)\neq \emptyset$ and such that $A_Y(f)\neq \emptyset$. Then, $f$ is lower semicontinuous if and only if, $f(x)=\sup_{\varphi\in A_Y(f)} \varphi(x)$ for all $x\in X$. In other words, $f$ is the supremum of functions from $Y$ that minors $f$ from below.
\end{Cor}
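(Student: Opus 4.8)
The plan is to derive the corollary directly from Theorem~\ref{dual1} by first computing the second conjugate $f^{\times\times}$ explicitly as the pointwise supremum of the minorants of $f$ coming from $Y$, an identity that will hold \emph{without} any lower semicontinuity assumption on $f$. I would start from the elementary remark that for $\varphi\in Y$ one has $\varphi\in A_Y(f)$ if and only if $f^{\times}(\varphi)=\sup_{x\in X}(\varphi(x)-f(x))\le 0$. Since $dom(f)\neq\emptyset$ and $A_Y(f)\neq\emptyset$, there is $\varphi_0\in Y$ with $\varphi_0\le f$, and then $-\infty< f^{\times}(\varphi_0)\le 0$, so $dom(f^{\times})\neq\emptyset$.

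Next, for a fixed $x\in X$ I would establish
\[ f^{\times\times}(x)\;=\;\sup_{\varphi\in A_Y(f)}\varphi(x). \]
The inequality ``$\ge$'' is immediate: if $\varphi\in A_Y(f)$ then $f^{\times}(\varphi)\le 0$, hence $\varphi(x)\le\varphi(x)-f^{\times}(\varphi)\le f^{\times\times}(x)$. For ``$\le$'', recall that in $f^{\times\times}(x)=\sup_{\varphi\in Y}(\varphi(x)-f^{\times}(\varphi))$ only the $\varphi$ with $f^{\times}(\varphi)<+\infty$ contribute (and there is at least one, namely $\varphi_0$); for such a $\varphi$ set $\psi:=\varphi-f^{\times}(\varphi)$. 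Because $Y$ is a cone containing the constants it is stable under adding an arbitrary real constant, so $\psi\in Y$; and $\psi\le f$ by the very definition of $f^{\times}$, so $\psi\in A_Y(f)$. Since $\psi(x)=\varphi(x)-f^{\times}(\varphi)$, taking the supremum over all admissible $\varphi$ gives $f^{\times\times}(x)\le\sup_{\psi\in A_Y(f)}\psi(x)$, which proves the identity.

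Finally, all the hypotheses of Theorem~\ref{dual1} are part of the present assumptions, so it applies to the pair $(X,Y)$ and the function $f$: the function $f$ is lower semicontinuous if and only if $f^{\times\times}=f$. Combining this equivalence with the identity just established yields exactly that $f$ is lower semicontinuous if and only if $f(x)=\sup_{\varphi\in A_Y(f)}\varphi(x)$ for every $x\in X$, i.e.\ $f$ is the pointwise supremum of the functions of $Y$ lying below it. The only delicate point — and the step I would treat as the (minor) main obstacle — is the verification that $\psi=\varphi-f^{\times}(\varphi)$ remains in $Y$: this is precisely where the extra hypothesis that $Y$ is a cone containing the constants is used, together with some bookkeeping about the value $+\infty$ that $f^{\times}$ may take.
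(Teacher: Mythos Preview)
Your proof is correct and follows essentially the same approach as the paper: both hinge on the observation that, for $\psi\in Y$ with $f^{\times}(\psi)<+\infty$, the function $\psi-f^{\times}(\psi)$ belongs to $A_Y(f)$ (using that $Y$ is a cone containing the constants), which yields $f^{\times\times}(x)\le \sup_{\varphi\in A_Y(f)}\varphi(x)$, and then Theorem~\ref{dual1} closes the argument. Your organization is slightly cleaner---you first establish the unconditional identity $f^{\times\times}(x)=\sup_{\varphi\in A_Y(f)}\varphi(x)$ and are explicit about discarding the $\varphi$ with $f^{\times}(\varphi)=+\infty$---but the substance is the same.
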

\begin{proof} Suppose that $f$ is lower semicontinuous. On one hand, we have $\sup_{\varphi\in A_Y(f)} \varphi(x) \leq f(x)$ for all $x\in X$. On the other hand, we known from Theorem \ref{dual1} that $f(x)= f^{\times\times}(x)=\sup_{\psi\in Y}\lbrace \psi(x)-f^{\times}(\psi)\rbrace \leq \sup_{\varphi\in A_Y(f)} \varphi(x)$, since $\psi(.)-f^{\times}(\psi)\leq f$ and $\psi(.)-f^{\times}(\psi)\in Y$ for each $\psi\in Y$. Now, if $f(x)=\sup_{\varphi\in A_Y(f)} \varphi(x)$ for all $x\in X$, then $f$ is lower semicontinuous as supremum of continuous functions.
\end{proof}
\begin{Exemp} From Corollary \ref{cor1} and Example 1., we get that

$(1)$ if $X$ is a normal Hausdorff space, then each lower semicontinuous function $f$ with a nonempty domain and bounded from below by a continuous function, is the supremum of the continuous functions that minors $f$ from below.

$(2)$ if $X$ is a metric space, then each lower semicontinuous function $f$ with a nonempty domain and bounded from below by a Lipschitz continuous function, is the supremum of the Lipschitz continuous functions that minors $f$ from below.

$(3)$ if $X$ is a Banach space having a $k$-times ($k \in \N \cup \lbrace +\infty\rbrace $) continuously differentiable bump function, then each lower semicontinuous function $f$ with a nonempty domain and bounded from below by a $k$-times continuously differentiable function, is the supremum of the $k$-times continuously differentiable function that minors $f$ from below.
\end{Exemp}
\vskip5mm

\subsection{A convex extention of lower semicontinuous function.}

Let $X$ be a  normal Hausdorff space and $(Y,\|.\|)$ is a Banach space included in $C_b(X)$ such that $\|.\|\geq \|.\|_{\infty}$ and $(X,Y)$ satisfies the hypothesis $(H)$. Clearly, this conditions implies the following properties:

$(1)$ $\|.\|\geq \|.\|_{\infty}$

$(2)$ $Y$ separate the points of $X$ 

$(3)$ for each $x\in X$, there exists $\varphi \in Y$ such that $\varphi(x)=1$.

\vskip5mm

For each $x\in X$, we denote by $\delta_x$ the Dirac evaluation defined by $\delta_x(\varphi)=\varphi(x)$ for all $\varphi\in Y$. The continuity of the linear map $\delta_x$ is guaranteed by the condition $(1)$ above. Clearly, $\delta_x \in B_{Y^*}$ (the unit ball of the topological dual space of $Y$) for all $x\in X$. We define $\Delta_Y: (X,\tau) \longrightarrow (Y^*, w^*)$ by $\Delta_Y(x)=\delta_x$ for all $x\in X$. The injectivity of $\Delta_Y$ is guaranteed by the condition $(2)$. We need the following proposition.

\begin{Prop} \label{Homeo} Let $(X,\tau)$ be a  normal Hausdorff space and $(Y,\|.\|)$ is a Banach space included in $C_b(X)$ such that $\|.\|\geq \|.\|_{\infty}$ and $(X,Y)$ satisfies the hypothesis $(H)$. Then, the map $\Delta_Y$ is an homeomorphism from $(X,\tau)$ onto $(\Delta_Y(X),w^*)$.
\end{Prop}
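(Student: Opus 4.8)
The plan is to show that $\Delta_Y$ is a continuous bijection onto its image whose inverse is also continuous, exploiting the separation properties that hypothesis $(H)$ forces on $Y$. Continuity of $\Delta_Y$ is essentially formal: a subbasic weak$^*$ neighborhood of $\delta_{x_0}$ is determined by finitely many functions $\varphi_1,\dots,\varphi_n\in Y$ and $\varepsilon>0$, namely $\{Q\in Y^*:|\langle Q-\delta_{x_0},\varphi_i\rangle|<\varepsilon\}$, and its preimage under $\Delta_Y$ is $\bigcap_{i=1}^n\{x:|\varphi_i(x)-\varphi_i(x_0)|<\varepsilon\}$, which is open in $(X,\tau)$ because each $\varphi_i$ is continuous. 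Injectivity is property $(2)$: if $x\neq y$ then some $\varphi\in Y$ separates them, hence $\delta_x\neq\delta_y$. So $\Delta_Y$ is a continuous bijection from $(X,\tau)$ onto $(\Delta_Y(X),w^*)$.

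The substantive step is that $\Delta_Y^{-1}$ is continuous, equivalently that $\Delta_Y$ is an open map onto its image, equivalently that $\Delta_Y$ maps closed sets of $X$ to (relatively) closed sets of $\Delta_Y(X)$. I would argue directly: let $F\subset X$ be closed and let $x_0\in X\setminus F$. Since $X$ is normal Hausdorff, $\{x_0\}$ is closed, so by Urysohn's lemma (or directly by hypothesis $(H)$, taking $U=X\setminus F$) there is a continuous $\sigma:X\to[0,1]$ with $\sigma\in Y$, $\sigma(x_0)=1$ and $\sigma\equiv 0$ on $F$. Then the set $V:=\{Q\in\Delta_Y(X):\langle Q,\sigma\rangle>1/2\}$ is a relatively weak$^*$-open neighborhood of $\delta_{x_0}$ which misses $\Delta_Y(F)$, since for $y\in F$ we have $\langle\delta_y,\sigma\rangle=\sigma(y)=0$. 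Hence $\Delta_Y(F)$ is closed in $\Delta_Y(X)$; as this holds for every closed $F$, $\Delta_Y$ is a closed map onto its image, so $\Delta_Y^{-1}$ is continuous and $\Delta_Y$ is a homeomorphism onto $(\Delta_Y(X),w^*)$.

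The main (and really only) obstacle is making sure the separating function $\sigma$ can be taken \emph{inside} $Y$ and not merely in $C_b(X)$; this is exactly what hypothesis $(H)$ provides, applied to the point $x_0$ and the open neighborhood $U=X\setminus F$. Everything else is a routine unwinding of the definition of the weak$^*$ topology. I would also remark that property $(1)$, $\|\cdot\|\geq\|\cdot\|_\infty$, is used only to guarantee that each $\delta_x$ is a bounded functional on $(Y,\|\cdot\|)$, so that $\Delta_Y$ indeed takes values in $Y^*$ (in fact in $B_{Y^*}$), which is needed for the statement to typecheck but plays no further role in the homeomorphism argument.
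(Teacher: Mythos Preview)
Your proof is correct and follows essentially the same approach as the paper's: both use the function $\sigma\in Y$ provided by hypothesis $(H)$ to build a weak$^*$ open half-space separating the relevant Dirac measures. The only cosmetic difference is that the paper argues directly that $\Delta_Y$ is an \emph{open} map (for $a\in U$ open, the set $\{y^*:\langle y^*,\sigma\rangle>0\}\cap\Delta_Y(X)$ is a weak$^*$ neighborhood of $\delta_a$ contained in $\Delta_Y(U)$), whereas you phrase the same computation as showing $\Delta_Y$ is a \emph{closed} map; since $\Delta_Y$ is a bijection onto its image these are equivalent, and the underlying idea is identical.
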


\begin{proof} Let $x, y \in X$ such that $x\neq y$. Since $Y$ separate the points of $X$, there exists $\sigma  \in Y$ such that $\sigma (x)\neq \sigma (y)$ and so $\Delta_Y$ is one-to-one. Clearly, $\Delta_Y$ is $\tau$-$w^*$-continuous, since $Y \subset C_b(X)$. Let us prove that $\Delta$ is open. Let $a\in X$ and $U$ an open set such that $a\in U$. We prove that there exists an open set $V$ of $\Delta_Y(X)$ such that $\Delta_Y(a)\in V$ and $V\subset \Delta_Y(U)$. Indeed, by the hypothesis $(H)$, there exists $\sigma  : X\longrightarrow [0,1]$ such that $\sigma \in Y$, $\sigma (a)=1$ and $\sigma (y)=0$ for all $y \in X\setminus U$. Set $W:=\lbrace y^* \in Y^*/ y^*(\sigma ):= \hat{\sigma }(y^*)>0 \rbrace$. We have that $W=\hat{\sigma }^{-1}(]0, +\infty[)$ and so it is a weak-star open subset of $Y^*$, moreover $\Delta_Y(a)\in W$. We set $V:=W\cap \Delta_Y(X)$. 

\end{proof}

The set $\overline{\Delta_Y(X)}^{w^*}\subset B_{Y^*}$ is a weak-star compact subset of $Y^*$ by the Banach-Alaoglu theorem. Note that when we take $Y=C_b(X)$, then the set $\overline{\Delta_Y(X)}^{w^*}$ coincides (up to a homeomorphism) with the Stone-\v{C}ech compactification $\beta X$.

\vskip5mm

For all $\varphi\in Y$ and all $Q\in Y^*$, we will use, according to the situations, the following equivalent notations 
$$\langle Q, \varphi\rangle:=Q(\varphi):=\hat{\varphi}(Q).$$ 
Now, given a bounded from below function $f$ defined on $X$, we denote by $\mathcal{F}(f)$ the Fenchel transform of the conjugacy $f^{\times}$ defined on the dual space $Y^*$ by :
$$\mathcal{F}(f)(Q):=(f^{\times})^*(Q):=\sup_{\varphi\in Y}\lbrace \langle Q, \varphi \rangle -f^{\times}(\varphi)\rbrace, \hspace{3mm} \forall Q\in Y^*.$$

We know that $\mathcal{F}(f)$ is convex and weak-star lower semicontinuous as Fenchel transform of the convex $1$-Lipschitz function $f^{\times}$ on $Y$ ([Proposition 1., \cite{Ba}]). 

\vskip5mm 
In the following lemma, we study some properties of the operator $\mathcal{F}$ which will be used in the next sections. 
\begin{Lem} \label{Prop1} Let $X$ be a normal Hausdorff space and $(Y,\|.\|)$ is a Banach space included in $C_b(X)$ such that $\|.\|\geq \|.\|_{\infty}$ and that the pair $(X,Y)$ satisfies $(H)$. Let $f: X\rightarrow \R \cup\left\{ + \infty \right\}$ be a bounded from below and lower semicontinuous fonction with $dom(f)\neq \emptyset$. Then, the following assertions holds.

$(1)$ The function $\mathcal{F}(f)$ is convex weak-star semicontinuous. We have $dom(\mathcal{F}(f))\subset \overline{\textnormal{co}}^{w^*}(\Delta_Y(X))$ and $f=\mathcal{F}(f)\circ \Delta_Y$. In other words, the following diagram commutes 

\[
  \xymatrix{
     X \ar[r]^{\Delta_Y} \ar[rd]_{f}  &  Y^* \ar[d]^{\mathcal{F}(f)} \\
      & \R\cup \lbrace+\infty\rbrace } 
 \]

$(2)$ When $f\equiv c$, where $c\in \R$, we have $\mathcal{F}(c)=c+i_{\overline{\textnormal{co}}^{w^*}(\Delta_Y(X))}$, where $i_{\overline{\textnormal{co}}^{w^*}(\Delta_Y(X))}$ is the indicator function of $\overline{\textnormal{co}}^{w^*}(\Delta_Y(X))$ which is equal to $0$ on $\overline{\textnormal{co}}^{w^*}(\Delta_Y(X))$ and $+\infty$ otherwise.

$(3)$ For all $\varphi\in Y$ we have $\mathcal{F}{(f-\varphi)}=\mathcal{F}(f)-\hat{\varphi}$. In particular we have $\mathcal{F}(\varphi)=\hat{\varphi}+i_{\overline{\textnormal{co}}^{w^*}(\Delta_Y(X))}$ for all $\varphi \in Y$.

$(4)$ We have the conservation of the infinimums : $$\inf_{x\in X} f(x)=\min_{ Q \in \overline{\Delta_Y(X)}^{w^*}} \mathcal{F}(f)(Q)=\min_{ Q \in \overline{\textnormal{co}}^{w^*}(\Delta_Y(X))} \mathcal{F}(f)(Q).$$

$(5)$ If $(x_n)_n$ is a sequence that minimize the function $f$ on $X$, then $(\delta_{x_n})_n$ is a sequence that minimize $\mathcal{F}(f)$ on $Y^*$.

\end{Lem}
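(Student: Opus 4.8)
The plan is to compute the three conjugates $f^\times$ (on $Y$), $f^{\times\times}$ (on $X$) and $\mathcal{F}(f)$ (on $Y^*$) as explicitly as possible and read the five assertions off these formulas. Apart from routine bookkeeping, the only inputs are Theorem~\ref{dual1}, the Hahn--Banach separation theorem in the locally convex space $(Y^*,w^*)$ (whose continuous dual is $\{\hat\varphi:\varphi\in Y\}$, identified with $Y$), the Banach--Alaoglu theorem, and the elementary duality $(s_K)^*=i_K$ for the support function $s_K(\varphi):=\sup_{P\in K}\langle P,\varphi\rangle$ of a weak$^*$ closed convex set $K\subset Y^*$, which is itself a restatement of the separation argument used below.

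For (1): for every $x\in X$ one has $\mathcal{F}(f)(\delta_x)=\sup_{\varphi\in Y}\{\varphi(x)-f^\times(\varphi)\}=f^{\times\times}(x)$, so the identity $f=\mathcal{F}(f)\circ\Delta_Y$ amounts to $f^{\times\times}=f$. To obtain this I would invoke Theorem~\ref{dual1}: $Y$ is a vector space, hence a cone containing $0$; with $c:=-\inf_X f$ (finite, since $f$ is bounded below with nonempty domain) we have $0\le f+c$, so $A_Y(f+c)\neq\emptyset$, and since $f+c$ is lower semicontinuous, Theorem~\ref{dual1} gives $(f+c)^{\times\times}=f+c$; since adding a constant $c$ to $f$ replaces $f^\times$ by $f^\times-c$ and hence $f^{\times\times}$ by $f^{\times\times}+c$, this yields $f^{\times\times}=f$. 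Convexity and weak$^*$ lower semicontinuity of $\mathcal{F}(f)$ are exactly the cited [Proposition~1,~\cite{Ba}] applied to the convex $1$-Lipschitz map $f^\times$. For $dom(\mathcal{F}(f))\subset\overline{\textnormal{co}}^{w^*}(\Delta_Y(X))$, take $Q$ outside this weak$^*$ compact convex set; Hahn--Banach furnishes $\varphi\in Y$ and $\alpha\in\R$ with $\langle Q,\varphi\rangle>\alpha\ge\varphi(x)$ for all $x\in X$, whence $f^\times(\lambda\varphi)\le\lambda\alpha-\inf_X f$ for every $\lambda>0$ and therefore $\mathcal{F}(f)(Q)\ge\lambda(\langle Q,\varphi\rangle-\alpha)+\inf_X f$, which tends to $+\infty$ as $\lambda\to+\infty$; hence $Q\notin dom(\mathcal{F}(f))$.

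Parts (2) and (3) are direct computations on top of (1). If $f\equiv c$ then $f^\times(\varphi)=\sup_{x\in X}\varphi(x)-c=s_K(\varphi)-c$ with $K:=\overline{\textnormal{co}}^{w^*}(\Delta_Y(X))$ (using $\sup_{x\in X}\varphi(x)=\sup_{P\in\Delta_Y(X)}\langle P,\varphi\rangle=\sup_{P\in K}\langle P,\varphi\rangle$), so taking the Fenchel transform and using $(s_K)^*=i_K$ gives $\mathcal{F}(c)=c+i_K$. For (3), since $Y$ is a vector space the substitution $\eta=\psi+\varphi$ gives $(f-\varphi)^\times(\psi)=f^\times(\psi+\varphi)$, whence $\mathcal{F}(f-\varphi)(Q)=\sup_{\eta\in Y}\{\langle Q,\eta-\varphi\rangle-f^\times(\eta)\}=\mathcal{F}(f)(Q)-\hat\varphi(Q)$; applying this with $f\equiv0$ and $-\varphi$ in place of $\varphi$, together with (2) for $c=0$, yields $\mathcal{F}(\varphi)=\hat\varphi+i_K$. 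Along the way one notes that $f-\varphi$ (and the constant $f\equiv c$) is again bounded below, lower semicontinuous and with nonempty domain, so all the objects involved are well defined.

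Finally, for (4) and (5): taking $\varphi=0$ in the supremum defining $\mathcal{F}(f)$ gives $\mathcal{F}(f)(Q)\ge-f^\times(0)=\inf_X f$ for every $Q\in Y^*$, while by (1) $\mathcal{F}(f)(\delta_x)=f(x)$ for all $x$; hence the infima of $\mathcal{F}(f)$ taken over $\Delta_Y(X)$, over $\overline{\Delta_Y(X)}^{w^*}$, over $\overline{\textnormal{co}}^{w^*}(\Delta_Y(X))$ and over all of $Y^*$ all equal $\inf_X f$, which proves the two equalities in (4). Both $\overline{\Delta_Y(X)}^{w^*}$ and $\overline{\textnormal{co}}^{w^*}(\Delta_Y(X))$ are weak$^*$ closed subsets of $B_{Y^*}$, hence weak$^*$ compact by Banach--Alaoglu, and the weak$^*$ lower semicontinuous function $\mathcal{F}(f)$ attains its infimum on each of them, so the two minima are genuine. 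For (5), if $(x_n)_n$ minimizes $f$ on $X$ then $\mathcal{F}(f)(\delta_{x_n})=f(x_n)\to\inf_X f=\inf_{Y^*}\mathcal{F}(f)$, so $(\delta_{x_n})_n$ minimizes $\mathcal{F}(f)$ on $Y^*$. I expect the only steps requiring genuine care to be the domain inclusion in (1) — i.e. making the separation argument airtight, using that $K$ is bounded so that the separating constant is finite — and checking that Theorem~\ref{dual1} really does apply after the harmless shift by $-\inf_X f$; everything else is formal manipulation of conjugates.
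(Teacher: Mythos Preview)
Your proof is correct and follows essentially the same route as the paper's: Hahn--Banach separation plus scaling for the domain inclusion in (1), Theorem~\ref{dual1} for $f=\mathcal{F}(f)\circ\Delta_Y$, the change of variable $(f-\varphi)^\times(\psi)=f^\times(\psi+\varphi)$ for (3), and the lower bound $\mathcal{F}(f)(Q)\ge -f^\times(0)$ together with compactness for (4)--(5). The only cosmetic differences are that you package (2) via the standard identity $(s_K)^*=i_K$ while the paper computes it by hand with a net argument, and you are more explicit than the paper about the shift by $-\inf_X f$ needed to guarantee $A_Y(f+c)\neq\emptyset$ before invoking Theorem~\ref{dual1}.
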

\begin{proof}
 $(1)$ From the definition of $f^{\times}$ we have $f^{\times}(\varphi)\leq \sup_{x\in X} \varphi(x)-\inf_{x\in X} f(x)=\sup_{x\in X} \varphi(x) + f^{\times}(0)$ for all $\varphi \in Y$. Thus, for all $\varphi \in Y$ and all $Q\in (Y)^*$ we have  $$Q(\varphi) - f^{\times}(\varphi)\geq Q(\varphi) -\sup_{x\in X} \varphi(x) - f^{\times}(0).$$ Let $Q\notin \overline{\textnormal{co}}^{w^*}(\Delta_Y(X))$, by the Hahn-Banach theorem, there exists $\varphi_0 \in Y$ such that $Q(\varphi_0)=\hat{\varphi}_0(Q) > 1\geq \sup_{S\in \overline{\textnormal{co}}^{w^*}(\Delta_Y(X))} \hat{\varphi}_0(S)$. On the other hand, since $\Delta_Y(X)\subset \overline{\textnormal{co}}^{w^*}(\Delta_Y(X))$, we have that $\sup_{S\in \overline{\textnormal{co}}^{w^*}(\Delta_Y(X))} \hat{\varphi}_0(S)\geq \sup_{x\in X} \varphi_0(x)$. Thus $$Q(\varphi_0) - \sup_{x\in X} \varphi_0(x)\geq Q(\varphi_0) -\sup_{S\in \overline{\textnormal{co}}^{w^*}(\Delta_Y(X))} \hat{\varphi}_0(S) >0.$$
Hence, for all $\lambda \in \R^*_+$, we have $$Q(\lambda\varphi_0) - \sup_{x\in X} \lambda\varphi_0(x)\geq \lambda\left(Q(\varphi_0) -\sup_{\overline{\textnormal{co}}^{w^*}(\Delta_Y(X))} (\varphi_0)\right)\stackrel{\lambda\rightarrow +\infty}{\longrightarrow} +\infty.$$
This implies that $\mathcal{F}(f)(Q)=+\infty$ whenever $Q\notin \overline{\textnormal{co}}^{w^*}(\Delta_Y(X))$. Thus $dom(\mathcal{F}(f)) \subset \overline{\textnormal{co}}^{w^*}(\Delta_Y(X))$. Now, the fact that $f=\mathcal{F}(f)\circ \delta$ follows from Theorem \ref{dual1}.\\
\item $(2)$ By definition we have :
\begin{eqnarray}
	\mathcal{F}(c)(Q)=\sup_{\varphi \in Y}\left\{Q(\varphi)-c^{\times}(\varphi)\right\}
	=\sup_{\varphi \in Y}\left\{Q(\varphi)-\sup_{x\in X}\varphi(x)\right\} +c\nonumber
\end{eqnarray}
We deduce from the above equality that $\mathcal{F}(c)(Q)\geq c$ for all $Q\in (Y)^*$. On the other hand, let $Q\in \overline{\textnormal{co}}^{w^*}(\Delta_Y(X))$, there exists a net $(Q_\alpha)_{\alpha}\subset \textnormal{conv}(\Delta_Y(X))$ such that $Q_\alpha \stackrel{w^*}{\rightarrow} Q$. For each $\alpha$, there exists $(\lambda^\alpha_i)_{i=1}^{n}\subset \R_+$ and $(x^\alpha_i)_{i=1}^n\subset X$ such that $\sum_{i=1}^n \lambda^\alpha_i=1$ and $Q_\alpha= \sum_{i=1}^n \lambda^\alpha_i \delta_{x^\alpha_i}$. We then have  $Q_\alpha(\varphi)=\sum_{i=1}^n \lambda^\alpha_i \varphi(x^\alpha_i)\leq \sup_{x \in X}\varphi(x)$ for all $\varphi \in Y$. By taking the weak-star limit, we obtain that $Q(\varphi)\leq \sup_{x \in X}\varphi(x)$ for all $\varphi \in Y$. Thus, using the above formula, we get $\mathcal{F}(c)(Q)\leq c$ for all $Q\in \overline{\textnormal{co}}^{w^*}(\Delta_Y(X))$ and so we have that $\mathcal{F}(c)(Q)= c$ for $Q\in \overline{\textnormal{co}}^{w^*}(\Delta_Y(X))$. From the part $(1)$, since $dom(\mathcal{F}(c))\subset \overline{\textnormal{co}}^{w^*}(\Delta_Y(X))$, we conclude that $\mathcal{F}(c)=c+i_{\overline{\textnormal{co}}^{w^*}(\Delta_Y(X))}$.\\
\item $(3)$ Let $\varphi\in Y$. By definition, we have for all $Q\in (Y)^*$ that $$\mathcal{F}{(f-\varphi)}(Q):=\sup_{\psi\in Y}\left\{Q(\psi) - (f-\varphi)^{\times} (\psi) \right\}.$$ Also by definition we have $(f-\varphi)^{\times} (\psi)=(f)^{\times} (\varphi+ \psi)$ for all $\varphi, \psi\in Y$. By a change of variable we obtain for all $Q\in (Y)^*$:
\begin{eqnarray}
 \mathcal{F}{(f-\varphi)}(Q):&=&\sup_{\psi\in Y}\left\{Q(\psi-\varphi) - (f)^{\times} (\psi) \right\}\nonumber\\ 	
&=& \sup_{\psi\in Y}\left\{Q(\psi) - (f)^{\times} (\psi) \right\}-Q(\varphi)\nonumber\\
&=&\mathcal{F}(f)(Q)-\hat{\varphi}(Q)\nonumber
\end{eqnarray}
Thus $\mathcal{F}{(f-\varphi)}=\mathcal{F}(f)-\hat{\varphi}$. In particular, when we take $f\equiv 0$ and by using the part $(2)$, we obtain that $\mathcal{F}(\varphi)=\hat{\varphi}+i_{\overline{\textnormal{co}}^{w^*}(\Delta_Y(X))}$, for all $\varphi\in Y$.\\

\item $(4)$ Let us prove that 
$$\inf_{x\in X} f(x)=\min_{ Q \in \overline{\Delta_Y(X)}^{w^*}} \mathcal{F}(f)(Q)=\min_{ Q \in \overline{\textnormal{co}}^{w^*}(\Delta_Y(X))} \mathcal{F}(f)(Q).$$
First, note that since $\mathcal{F}(f)$ is weak$^*$ lower semicontinuous and $\overline{\Delta_Y(X)}^{w^*}$ and $\overline{\textnormal{co}}^{w^*}(\Delta_Y(X))$ are weak$^*$ compact, then $\mathcal{F}(f)$ attains its minimum on these sets. Now, since $\Delta(X)\subset \overline{\Delta_Y(X)}^{w^*}\subset \overline{\textnormal{co}}^{w^*}(\Delta_Y(X))$ then, using the part $(1)$  we have
\begin{eqnarray} \label{inf}
\min_{ Q \in \overline{\textnormal{co}}^{w^*}(\Delta_Y(X))} \mathcal{F}(f)(Q)&\leq& \min_{ Q \in \overline{\Delta_Y(X)}^{w^*} } \mathcal{F}(f)(Q)\\
                                                                     &\leq& \inf_{ Q \in \Delta(X) } \mathcal{F}(f)(Q)\\
                                                                     &=& \inf_{x\in X} f(x).
\end{eqnarray}
On the other hand, it follows from the definition that $\mathcal{F}(f)(Q)\geq -f^{\times}(0):=\inf_{x\in X} f(x)$ for all $Q\in (Y)^{*}$. Thus, $\inf_{ Q \in \overline{\textnormal{co}}^{w^*}(\Delta_Y(X))} \mathcal{F}(f)(Q)\geq \inf_{x\in X} f(x).$  Hence, $$\inf_{x\in X} f(x)=\inf_{ Q \in \overline{\textnormal{co}}^{w^*}(\Delta_Y(X))} \mathcal{F}(f)(Q)=\min_{ Q \in \overline{\textnormal{co}}^{w^*}(\Delta_Y(X))} \mathcal{F}(f)(Q).$$ Now, since  $\Delta(X)\subset \overline{\Delta_Y(X)}^{w^*}\subset \overline{\textnormal{co}}^{w^*}(\Delta_Y(X))$, using the part $(1)$, we get the conclusion.\\
\item $(5)$ Let  $(x_n)_n$ be a sequence that minimize $f$ on $X$. Since $f(x)=\mathcal{F}(f)(\delta_x)$ for all $x\in X$ and $\inf_{x\in X} f(x)=\min_{ Q \in \overline{\textnormal{co}}^{w^*}(\Delta_Y(X))} \mathcal{F}(f)(Q)$ then the seqence $(\delta_{x_n})_n$ minimize $\mathcal{F}(f)$ on $\overline{\textnormal{co}}^{w^*}(\Delta_Y(X))$.
\end{proof}

Now, we prove the following particular case in the compact framework.

\begin{Cor} Let $(X, \tau)$ be a compact Hausdorff space. Then, there exists a weak$^*$ compact subspace $K$ of some dual Banach space $E^*$ and an homeomorphism $H: (X, \tau) \longrightarrow (K,w^*)$, satisfying the following property: for every proper lower semicontinuous function $f : (X,\tau) \longrightarrow  \R \cup \lbrace +\infty \rbrace $, there exists a proper convex weak$^*$ lower semicontinuous function $F: (\overline{\textnormal{co}}^{w^*}(K), w^*) \longrightarrow  \R \cup \lbrace +\infty \rbrace $, such that the following diagramm commutes 

\[
  \xymatrix{
    (X, \tau) \ar[r]^{H} \ar[rd]_{f}  &  (K,w^*) \ar[d]^{F_{|K}} \\
     & \R \cup \lbrace +\infty \rbrace } 
 \]
and 
$$\textnormal{argmin}(f)=H^{-1}(\textnormal{argmin}(F_{|K}))= H^{-1}(\textnormal{argmin}(F)\cap K).$$


\end{Cor}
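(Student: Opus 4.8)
The plan is to obtain this corollary as a direct specialization of Lemma~\ref{Prop1} to the setting $Y = C_b(X)$, using the fact that a compact Hausdorff space is automatically normal. First I would observe that since $X$ is compact Hausdorff, it is normal Hausdorff, so by Example~\ref{Exemp1}(1) the pair $(X, C_b(X))$ satisfies the hypothesis $(H)$; moreover $C_b(X) = C(X)$ here, and with the sup-norm $\|\cdot\|_\infty$ it is a Banach space with $\|\cdot\| = \|\cdot\|_\infty \geq \|\cdot\|_\infty$. Thus all the standing assumptions of Lemma~\ref{Prop1} are met with $Y = C_b(X)$. I then set $E := C_b(X)$, $H := \Delta_Y$, and $K := \Delta_Y(X) = \overline{\Delta_Y(X)}^{w^*}$, the last equality holding because $\Delta_Y$ is a homeomorphism (Proposition~\ref{Homeo}) and $X$ is compact, so $\Delta_Y(X)$ is already weak$^*$ compact, hence weak$^*$ closed. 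This makes $H$ a homeomorphism from $(X,\tau)$ onto $(K, w^*)$ with $K$ weak$^*$ compact in $E^*$.

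Next, given a proper lower semicontinuous $f : X \to \R \cup \{+\infty\}$, I would first reduce to the bounded-from-below case: since $X$ is compact and $f$ is lower semicontinuous, $f$ attains its infimum, so in particular $f$ is bounded from below and $A_Y(f + c) \neq \emptyset$ for a suitable constant $c$ (indeed the constant function $\inf_X f$ lies below $f$ and belongs to $Y$). Also $\mathrm{dom}(f) \neq \emptyset$ since $f$ is proper. Therefore Lemma~\ref{Prop1} applies to $f$ with this choice of $Y$. I then define $F := \mathcal{F}(f) = (f^\times)^* : E^* \to \R \cup \{+\infty\}$. By Lemma~\ref{Prop1}(1), $F$ is convex and weak$^*$ lower semicontinuous, $\mathrm{dom}(F) \subset \overline{\mathrm{co}}^{w^*}(\Delta_Y(X)) = \overline{\mathrm{co}}^{w^*}(K)$, and $f = F \circ \Delta_Y = F \circ H$; restricting to $K$, since $K = \Delta_Y(X)$, this reads $f = F_{|K} \circ H$, which is exactly the commuting diagram. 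Properness of $F$ follows because $F$ takes the finite value $\inf_X f$ somewhere on $K$ (again by part (1), $F(\delta_x) = f(x)$) and is not identically $+\infty$.

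For the argmin statement, I would use Lemma~\ref{Prop1}(4): $\inf_X f = \min_{Q \in \overline{\Delta_Y(X)}^{w^*}} F(Q) = \min_{Q \in \overline{\mathrm{co}}^{w^*}(K)} F(Q)$, and all three common values are finite and attained. Now for $x \in X$: $x \in \mathrm{argmin}(f)$ iff $f(x) = \inf_X f$ iff $F(H(x)) = \inf_X f = \min_K F_{|K}$ iff $H(x) \in \mathrm{argmin}(F_{|K})$, using $f = F_{|K}\circ H$ and the equality of the minimum values; this gives $\mathrm{argmin}(f) = H^{-1}(\mathrm{argmin}(F_{|K}))$. For the second equality, since $\mathrm{dom}(F) \subset \overline{\mathrm{co}}^{w^*}(K)$ and $\min_{\overline{\mathrm{co}}^{w^*}(K)} F = \min_K F_{|K}$ (both equal $\inf_X f$), any global minimizer of $F$ on $E^*$ achieves the value $\inf_X f$; such a point lies in $\mathrm{dom}(F) \subset \overline{\mathrm{co}}^{w^*}(K)$, but it need not lie in $K$, so one intersects: $\mathrm{argmin}(F) \cap K = \{Q \in K : F(Q) = \inf_X f\} = \mathrm{argmin}(F_{|K})$, whence $H^{-1}(\mathrm{argmin}(F_{|K})) = H^{-1}(\mathrm{argmin}(F) \cap K)$.

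The only real point requiring care — rather than a genuine obstacle — is the identification $K = \Delta_Y(X) = \overline{\Delta_Y(X)}^{w^*}$ and the resulting simplification $F \circ H = F_{|K} \circ H$; this is where compactness of $X$ is used in an essential way (for a general normal Hausdorff space one would only get $f = F \circ \Delta_Y$ on the image, not a statement purely in terms of the restriction to the weak$^*$ closure). Everything else is a transcription of Lemma~\ref{Prop1}(1) and (4) into the language of the corollary, together with the elementary remark that on a compact space a proper lower semicontinuous function is automatically bounded below and attains its infimum.
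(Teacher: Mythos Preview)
Your proof is correct and follows essentially the same route as the paper's own proof: take $Y=C(X)=C_b(X)$, use Proposition~\ref{Homeo} (together with compactness of $X$) to identify $K=\Delta_Y(X)=\overline{\Delta_Y(X)}^{w^*}$, and then set $F=\mathcal{F}(f)$ and read off the conclusions from parts~(1) and~(4) of Lemma~\ref{Prop1}. The paper simply states that ``the conclusion follows from Lemma~\ref{Prop1}'', whereas you have written out the verification of the argmin identities in detail; modulo the harmless difference of defining $F$ on all of $E^*$ rather than directly on $\overline{\textnormal{co}}^{w^*}(K)$, the arguments coincide.
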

 \begin{proof} By applying Proposition \ref{Homeo} with $Y=C(X)$, we get that the compact Hausdorff space $(X, \tau)$ is homeomorphic to the weak$^*$ compact subset $(\Delta_Y(X),w^*)$ of $(C(X))^*$. Thus, the conclusion follows from Lemma \ref{Prop1} by setting $K:=\Delta_Y(X) (=\overline{\Delta_Y(X)}^{w^*})$ and $F:=\mathcal{F}(f)$. 
 
 \end{proof}
\section{Duality and inf-convolution.}
In this section, we give the proof of Theorem \ref{inf-conv} below. This theorem has a know analogous in the classical Fenchel duality. In all this section, we assume that $X$ is a normal Hausdorff space and $Y=C_b(X)$. Thus the hypothesis $(H)$ is satisfied for the pair $(X,Y)$ by the Urysohn's lemma (see Exemple \ref{Exemp1}). Recall that $$A_Y(f):=\lbrace \varphi\in Y/ \varphi \leq f\rbrace.$$ Recall that the inf-convolution of two functions $k$ and $l$ defined on vector space $Z$ is defined for all $z\in Z$ as follows:
\begin{eqnarray*}
 k\diamond l(z)&:=& \inf_{y\in Z} \lbrace k(y)+l(z-y)\rbrace
\end{eqnarray*} 

\begin{Thm} \label{inf-conv} Let $X$ be a normal Hausdorff space and $Y=C_b(X)$. Let $f, g : X\longrightarrow \R$ be bounded from below lower semicontinuous functions (here $dom(f)=dom(g)=X$). Then $(f+g)^{\times}=f^{\times}\diamond g^{\times}$ on $C_b(X)$.
\end{Thm}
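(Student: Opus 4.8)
The statement is an equality of two functions on $C_b(X)$, so the plan is to prove the two inequalities separately. The inequality $f^{\times}\diamond g^{\times}\ge (f+g)^{\times}$ is elementary and uses no hypothesis on $X$: for every $\psi\in Y$ and every $x\in X$ one has
$$f^{\times}(\psi)+g^{\times}(\varphi-\psi)\ \ge\ \bigl(\psi(x)-f(x)\bigr)+\bigl(\varphi(x)-\psi(x)-g(x)\bigr)\ =\ \varphi(x)-f(x)-g(x),$$
so taking the supremum over $x\in X$ gives $f^{\times}(\psi)+g^{\times}(\varphi-\psi)\ge (f+g)^{\times}(\varphi)$, and then the infimum over $\psi\in Y$ yields the desired inequality. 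The whole difficulty therefore lies in the reverse inequality $f^{\times}\diamond g^{\times}(\varphi)\le (f+g)^{\times}(\varphi)$, and this is the step where normality of $X$ must enter.

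To handle it, fix $\varphi\in C_b(X)$ and set $c:=(f+g)^{\times}(\varphi)$; since $f$ and $g$ are bounded from below and $\varphi$ is bounded, $c$ is a finite real number. It suffices to exhibit one function $\psi\in C_b(X)$ with $f^{\times}(\psi)\le c$ and $g^{\times}(\varphi-\psi)\le 0$, for then $f^{\times}\diamond g^{\times}(\varphi)\le f^{\times}(\psi)+g^{\times}(\varphi-\psi)\le c$. Unravelling the definition of the conjugacy, the condition $f^{\times}(\psi)\le c$ means exactly $\psi\le f+c$ on $X$, and $g^{\times}(\varphi-\psi)\le 0$ means exactly $\varphi-g\le\psi$ on $X$. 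So what is needed is a continuous function $\psi$ satisfying the pointwise sandwich $\varphi-g\le\psi\le f+c$. Here $\varphi-g$ is upper semicontinuous (as $g$ is l.s.c.\ and $\varphi$ continuous), $f+c$ is lower semicontinuous, and $\varphi-g\le f+c$ holds precisely because $\varphi(x)-f(x)-g(x)\le c$ for every $x$. The existence of a continuous function squeezed between a u.s.c.\ minorant and an l.s.c.\ majorant, when the former lies below the latter, is exactly the Kat\v{e}tov--Tong insertion theorem, which is valid because $X$ is normal; this is the heart of the argument and the point at which I expect the only genuine obstacle.

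One technical detail remains: the Kat\v{e}tov--Tong theorem produces a continuous function, but we need $\psi\in C_b(X)$, while $f+c$ may be unbounded above and $\varphi-g$ unbounded below. This is fixed by truncating before inserting: put $L:=\inf_X f+c$ and $C:=\max\bigl(\|\varphi\|_{\infty}-\inf_X g,\ L\bigr)$, and replace the bounds by $u:=\max(\varphi-g,\,L)$ and $v:=\min(f+c,\,C)$. Then $u$ is still u.s.c., $v$ still l.s.c., one checks directly that $\varphi-g\le u\le v\le f+c$ and that $u$ and $v$ both take values in the bounded interval $[L,C]$; applying Kat\v{e}tov--Tong to $u\le v$ gives $\psi\in C(X)$ with $u\le\psi\le v$, hence $\psi$ is bounded, $\psi\in C_b(X)$, and $\varphi-g\le\psi\le f+c$ as required. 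Combining this with the easy inequality gives $(f+g)^{\times}=f^{\times}\diamond g^{\times}$ on $C_b(X)$; in fact the argument shows the inf-convolution is exact, the infimum being attained at this $\psi$. Everything apart from the insertion-theorem step is routine manipulation of the definitions.
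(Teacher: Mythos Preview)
Your proof is correct and is considerably more direct than the paper's. Both arguments ultimately rest on the Kat\v{e}tov--Tong insertion theorem, but the paper reaches it by an indirect route: it first proves (Lemma~\ref{lem1}) that $A_Y(f+g)=A_Y(f)+A_Y(g)$ via insertion, then establishes (Lemma~\ref{lem2}) the formula $f^{\times}(\xi)=\inf_{\varphi\in A_Y(f)}\varphi^{\times}(\xi)$ by a minimax argument over the weak$^*$-compact set $\overline{\textnormal{co}}^{w^*}(\Delta_Y(X))$, and only then combines these with a claim for bounded continuous $\varphi,\psi$ to conclude. You instead apply insertion directly to the sandwich $\varphi-g\le\psi\le f+c$ and read off the result in one step, with no minimax and no dual embedding needed; as you observe, this even shows the inf-convolution is exact, the infimum being attained at the inserted $\psi$. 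The paper's longer detour is not wasted from its own point of view, since Lemmas~\ref{lem1} and~\ref{lem2} are reused later (notably Lemma~\ref{lem2} feeds into Proposition~\ref{Prop2}), but for the statement at hand your argument is cleaner. It is also worth noting that your explicit truncation ensuring $\psi\in C_b(X)$ addresses a boundedness issue that the paper's proof of Lemma~\ref{lem1} passes over without comment.
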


The proof of this theorem will be given after some preliminary results. 

\begin{Lem} \label{lem1} Let $X$ be a normal Hausdorff space and $Y=C_b(X)$. Then, we have that $A_Y(f+g)=A_Y(f) + A_Y(g)$ for all $f,g: X \longrightarrow \R$ bounded from below and lower semicontinuous (here $dom(f)=dom(g)=X$). 
\end{Lem}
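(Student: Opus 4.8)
The plan is to establish the two inclusions separately. The inclusion $A_Y(f)+A_Y(g)\subseteq A_Y(f+g)$ is immediate: whenever $\varphi,\psi\in C_b(X)$ satisfy $\varphi\le f$ and $\psi\le g$, then $\varphi+\psi\in C_b(X)$ and $\varphi+\psi\le f+g$. All the content lies in the reverse inclusion, which I would obtain by a continuous insertion (sandwich) argument relying on normality.

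So fix $h\in A_Y(f+g)$, that is $h\in C_b(X)$ with $h\le f+g$. Setting $\psi:=h-\varphi$, writing $h=\varphi+\psi$ with $\varphi\in A_Y(f)$ and $\psi\in A_Y(g)$ amounts \emph{exactly} to producing a single $\varphi\in C_b(X)$ with $h-g\le\varphi\le f$: indeed $\psi=h-\varphi$ is then automatically bounded and continuous, the condition $\psi\le g$ is equivalent to $\varphi\ge h-g$, and $\varphi\le f$ gives $\varphi\in A_Y(f)$. Now $h-g$ is upper semicontinuous, being the sum of the continuous function $h$ and the upper semicontinuous function $-g$, and it is bounded above by $\beta:=\|h\|_\infty-\inf_X g\in\R$ (the infimum being finite since $g$ is bounded below); while $f$ is lower semicontinuous and bounded below by $\alpha:=\inf_X f\in\R$; and $h-g\le f$ holds because $h\le f+g$.

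To guarantee that the inserted function is not merely continuous but bounded, I would replace $h-g$ and $f$ by the truncations $u:=\max(h-g,\alpha)$ and $v:=\min(f,\max(\alpha,\beta))$. Then $u$ is upper semicontinuous, $v$ is lower semicontinuous, both take values in the bounded interval $[\alpha,\max(\alpha,\beta)]$, and a direct check (using $h-g\le f$, $h-g\le\beta$ and $\alpha\le f$) gives $u\le v$. Since $X$ is normal, the Kat\v{e}tov--Tong insertion theorem provides a continuous $\varphi:X\to\R$ with $u\le\varphi\le v$. This $\varphi$ is bounded, hence $\varphi\in C_b(X)$, and it satisfies $\varphi\le v\le f$ and $\varphi\ge u\ge h-g$; therefore $\psi:=h-\varphi\in A_Y(g)$ and $h=\varphi+\psi\in A_Y(f)+A_Y(g)$, which finishes the reverse inclusion and the lemma.

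The main obstacle is precisely this insertion step, which is the only place where normality enters: one must squeeze a continuous function between the upper semicontinuous minorant $h-g$ and the lower semicontinuous majorant $f$. The truncations $u,v$ are merely bookkeeping to land in $C_b(X)$ rather than in $C(X)$. If a self-contained argument is preferred, the insertion theorem can be reproved in place by the classical dyadic construction, applying Urysohn's lemma to the disjoint closed sets $\{u\ge q\}$ and $\{v\le q'\}$ for rationals $q>q'$; this is in the spirit of the rest of the paper, where the hypothesis $(H)$ (an avatar of Urysohn's lemma) is the recurring tool.
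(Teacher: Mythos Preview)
Your proof is correct and follows essentially the same route as the paper: both establish the nontrivial inclusion by applying the Kat\v{e}tov--Tong insertion theorem to the pair $h-g\le f$ (upper semicontinuous below lower semicontinuous) and then split $h$ accordingly. Your argument is in fact more careful than the paper's, which invokes the insertion theorem but does not explicitly verify that the inserted function can be taken \emph{bounded}; your truncation to $u=\max(h-g,\alpha)$ and $v=\min(f,\max(\alpha,\beta))$ handles exactly this point.
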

\begin{proof} Clearly we have that $A_Y(f) + A_Y(g) \subseteq A_Y(f+g)$. Let us prove the converse. Indeed, let $\varphi \in A_Y(f+g)$. Then we have $\varphi \leq f+g$. In other words, $\varphi -g \leq f$, with $\varphi -g$ uper semicontinuous and $f$ lower semicontinuous. Since $X$ is a normal space, using the insertion theorem [Theorem 1. \cite{GS}], there exists a continuous function $\psi$ on $X$ such that $\varphi -g \leq \psi \leq f$. Thus, we have $\psi \leq f$ i.e. $\psi\in A_Y(f)$ and $\varphi- \psi \leq g$ i.e.  $\varphi- \psi\in A_Y(g)$ with $\varphi =\psi +(\varphi-\psi)$. Hence $\varphi\in A_Y(f) + A_Y(g)$.

\end{proof}
\begin{Lem} \label{lem2}  Let $X$ be a normal Hausdorff space and $Y=C_b(X)$. Let $f : X\longrightarrow \R \cup \lbrace +\infty \rbrace$ be a bounded from below lower semicontinuous function with $dom(f)\neq \emptyset$. Then, for all $\xi \in C_b(X)$, we have that $$f^{\times}(\xi)=\inf_{\varphi\in A_Y(f)} \varphi^{\times}(\xi).$$ 
\end{Lem}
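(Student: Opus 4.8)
The plan is to establish the two inequalities separately, the substantial direction being obtained not by an approximation argument but by exhibiting a single, explicit element of $A_Y(f)$ that already realizes the value $f^{\times}(\xi)$.

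\textbf{The easy inequality.} Recall that $f^{\times}(\xi)=\sup_{x\in X}\{\xi(x)-f(x)\}$ and, for $\varphi\in Y$, $\varphi^{\times}(\xi)=\sup_{x\in X}\{\xi(x)-\varphi(x)\}$. First note $A_Y(f)\neq\emptyset$: since $f$ is bounded from below, any constant $m\leq f$ lies in $A_Y(f)$, so the infimum on the right-hand side is taken over a nonempty set. Now if $\varphi\in A_Y(f)$ then $\varphi\leq f$, hence $\xi(x)-\varphi(x)\geq\xi(x)-f(x)$ for every $x\in X$; taking suprema over $x$ gives $\varphi^{\times}(\xi)\geq f^{\times}(\xi)$, and then taking the infimum over $\varphi\in A_Y(f)$ yields $\inf_{\varphi\in A_Y(f)}\varphi^{\times}(\xi)\geq f^{\times}(\xi)$.

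\textbf{The reverse inequality.} I would first check that $c:=f^{\times}(\xi)$ is a finite real number: since $\xi$ is bounded and $f$ is bounded from below, $\xi(\cdot)-f(\cdot)$ is bounded above, so $c<+\infty$; and since $dom(f)\neq\emptyset$, evaluating at a point of the domain shows $c>-\infty$. Then set $\varphi_0:=\xi-c$. Because $\xi\in C_b(X)$ and $c$ is a constant, $\varphi_0\in C_b(X)=Y$; and by the very definition of $c$ we have $\xi(x)-f(x)\leq c$ for all $x\in X$, i.e. $\varphi_0=\xi-c\leq f$, so $\varphi_0\in A_Y(f)$. Finally $\varphi_0^{\times}(\xi)=\sup_{x\in X}\{\xi(x)-\varphi_0(x)\}=\sup_{x\in X}\{c\}=c=f^{\times}(\xi)$, which gives $\inf_{\varphi\in A_Y(f)}\varphi^{\times}(\xi)\leq f^{\times}(\xi)$. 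Combining the two inequalities proves the lemma; in fact the infimum is attained, at $\varphi_0=\xi-f^{\times}(\xi)$.

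\textbf{On the difficulty.} There is essentially no obstacle here: the only points requiring a moment's attention are the finiteness of $f^{\times}(\xi)$ — which is exactly where the boundedness-from-below hypothesis is used — and the membership $\varphi_0\in C_b(X)$, which needs nothing beyond $\xi$ being bounded and continuous and $c$ being a constant. In particular neither the lower semicontinuity of $f$ nor the normality of $X$ plays any role in this lemma; these hypotheses are simply inherited from the ambient setting in which the lemma will be applied.
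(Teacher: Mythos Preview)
Your proof is correct and considerably simpler than the paper's. The paper first reduces to the case $\xi=0$ via the identities $f^{\times}(\xi)=(f-\xi)^{\times}(0)$ and $A_Y(f-\xi)=A_Y(f)-\xi$, and then, to show $\inf_{\varphi\in A_Y(f)}\varphi^{\times}(0)\leq f^{\times}(0)$, invokes the machinery of Lemma~\ref{Prop1} to rewrite $\varphi^{\times}(0)$ as $-\min_{Q\in\overline{\textnormal{co}}^{w^*}(\Delta_Y(X))}\hat{\varphi}(Q)$ and applies a minimax theorem of Terkelsen on $\overline{\textnormal{co}}^{w^*}(\Delta_Y(X))\times A_Y(f)$ to interchange the $\sup$ and $\min$. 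You bypass all of this by simply exhibiting the minimizer $\varphi_0=\xi-f^{\times}(\xi)$ directly.

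Your route buys more: it shows the infimum is actually a minimum, and, as you correctly observe, uses neither the lower semicontinuity of $f$ nor the normality of $X$---only boundedness from below, $dom(f)\neq\emptyset$, and the fact that $C_b(X)$ contains the constants. The paper's route, by contrast, leans on the weak$^*$ compact convex structure already set up in the preceding section; this is consistent with the paper's overall program but is unnecessary for the lemma itself.
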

\begin{proof} It suffices to shows that $f^{\times}(0)=\inf_{\varphi\in A_Y(f)} \varphi^{\times}(0)$, since $f^{\times}(\xi)=(f-\xi)^{\times}(0)$ and $A_Y(f-\xi)=A_Y(f) -\xi$, for all $\xi\in C_b(X)$. From the part $(4)$ in Lemma \ref{Prop1}, we have that $\varphi^{\times}(0):=-\inf_{x\in X}\varphi(x)=-\min_{Q\in \overline{\textnormal{co}}^{w^*}(\Delta_Y(X))} \mathcal{F}(\varphi)(Q)$, for all $\varphi\in C_b(X)$. Thanks to the part $(3)$ of Lemma \ref{Prop1}, we get that $\varphi^{\times}(0)=-\min_{Q\in \overline{\textnormal{co}}^{w^*}(\Delta_Y(X))} \hat{\varphi}(Q)$, for all $\varphi\in C_b(X)$. Thus, we have that 

\begin{eqnarray} \label{Eq1}
\inf_{\varphi\in A_Y(f)} \varphi^{\times}(0)&=&\inf_{\varphi\in A_Y(f)}(-\min_{Q\in \overline{\textnormal{co}}^{w^*}(\Delta_Y(X))} \hat{\varphi}(Q))\nonumber \\
                                         &=& -\sup_{\varphi\in A_Y(f)} \min_{Q\in \overline{\textnormal{co}}^{w^*}(\Delta_Y(X))} \hat{\varphi}(Q) .
\end{eqnarray}
Applying the minimax theorem [Corollary 2., \cite{FT}] to $\langle \cdot, \cdot \rangle: \overline{\textnormal{co}}^{w^*}(\Delta_Y(X)) \times A_Y(f) \longrightarrow \R$ defined by $\langle Q,\varphi \rangle:=\hat{\varphi}(Q)=Q(\varphi)$, we have that $$\sup_{\varphi\in A_Y(f)} \min_{Q\in \overline{\textnormal{co}}^{w^*}(\Delta_Y(X))} \hat{\varphi}(Q)=\min_{Q\in \overline{\textnormal{co}}^{w^*}(\Delta_Y(X))} \sup_{\varphi\in A_Y(f)}\hat{\varphi}(Q).$$ Hence, using (\ref{Eq1}) we obtain that 
\begin{eqnarray} \label{Eq2}
\inf_{\varphi\in A_Y(f)} \varphi^{\times}(0)=-\min_{Q\in \overline{\textnormal{co}}^{w^*}(\Delta_Y(X))} \sup_{\varphi\in A_Y(f)}\hat{\varphi}(Q).
\end{eqnarray}
It is easy to see that $Q(1)=1$ for all $Q\in \overline{\textnormal{co}}^{w^*}(\Delta_Y(X))$. Let $\varphi_0$ be the constant function defined by $\varphi_0(x):=\inf_X f$ for all $x\in X$. We have that $\varphi_0\leq f$ and so, $\sup_{\varphi\in A_Y(f)}\hat{\varphi}(Q)\geq Q(\varphi_0)=\varphi_0$ for all $Q\in \overline{\textnormal{co}}^{w^*}(\Delta_Y(X))$. This implies that $\min_{Q\in \overline{\textnormal{co}}^{w^*}(\Delta_Y(X))}\sup_{\varphi\in A_Y(f)}\hat{\varphi}(Q)\geq \varphi_0$. Thus, from (\ref{Eq2}) we obtain that $\inf_{\varphi\in A_Y(f)} \varphi^{\times}(0)\leq -\varphi_0=-\inf_{x\in X}f:=f^{\times}(0)$. Now, to see the converse, since $\Delta(X)\subset \overline{\textnormal{co}}^{w^*}(\Delta_Y(X))$, we have that $$\min_{Q\in \overline{\textnormal{co}}^{w^*}(\Delta_Y(X))} \sup_{\varphi\in A_Y(f)}\hat{\varphi}(Q)\leq \inf_{x\in X} \sup_{\varphi\in A_Y(f)}\varphi(x)\leq \inf_{x\in X} f(x).$$ 
Since $\inf_{x\in X} f(x):=-f^{\times}(0)$, using (\ref{Eq2}), we have that $f^{\times}(0)\leq \inf_{\varphi\in A_Y(f)} \varphi^{\times}(0)$. 
\end{proof}

\vskip5mm

\begin{proof}[Proof of Theorem \ref{inf-conv}] It is easy to see that for all $\varphi, \psi \in C_b(X)$, we have 
$$f^{\times}(\psi) +g^{\times}(\varphi-\psi)\geq (f+g)^{\times}(\varphi).$$
Taking the infinimum over $\psi\in C_b(X)$, we get that $(f+g)^{\times}\leq f^{\times}\diamond g^{\times}$. 
It is easy to verify the following Claim.  

\noindent {\it Claim.} If $\varphi, \psi \in C_b(X)$, then $(\varphi + \psi)^{\times} = \varphi^{\times} \diamond \psi^{\times}$ on $C_b(X)$. 

\vskip5mm

\noindent Now, let $\theta, \xi \in C_b(X)$. From the Lemma \ref{lem2} we have that 
\begin{eqnarray*}
f^{\times}(\xi)+g^{\times}(\theta-\xi)&=&\inf_{\varphi\in A_Y(f)} \varphi^{\times}(\xi)+\inf_{\psi\in A_Y(g)} \psi^{\times}(\theta-\xi)\\
                                    &=& \inf_{(\varphi,\psi)\in A_Y(f)\times A_Y(g)}\lbrace\varphi^{\times}(\xi)+\psi^{\times}(\theta-\xi)\rbrace
\end{eqnarray*}
By taking the infinimum over $\xi$ in the above formula, we obtain
\begin{eqnarray*}
f^{\times}\diamond g^{\times}(\theta)&=& \inf_{(\varphi,\psi)\in A_Y(f)\times A_Y(g)}\lbrace\varphi^{\times}\diamond \psi^{\times}(\theta).\rbrace
\end{eqnarray*}
Using the {\it Claim.} we have 
\begin{eqnarray*}
f^{\times}\diamond g^{\times}(\theta)&=& \inf_{(\varphi,\psi)\in A_Y(f)\times A_Y(g)}\lbrace (\varphi+\psi)^{\times}(\theta)\rbrace.
\end{eqnarray*}
From Lemma \ref{lem1} we have
\begin{eqnarray*}
f^{\times}\diamond g^{\times}(\theta)&=& \inf_{\mu\in A_Y(f+g)}\mu^{\times}(\theta).
\end{eqnarray*}

Using again Lemma \ref{lem2}, we get that $f^{\times}\diamond g^{\times}(\theta)=(f+g)^{\times}(\theta)$ for all $\theta \in C_b(X)$.
 \end{proof}

\section{Proof of Theorem \ref{Iso-cone}.} 
For the proof of Theorem \ref{Iso-cone}, we also need the following propositions.
 \begin{Prop} \label{Delta-convex} Let $X$ be a topological space, $Y=C_b(X)$ and $\emptyset\neq A\subset C_b(X)$. Then, $A$ is $\Delta$-set if and only if, there exists a bounded from below lower semicontinuous function $f: X\longrightarrow \R$ such that $A=A_Y(f):=\lbrace \psi\in C_b(X)/ \psi \leq f\rbrace$.
\end{Prop}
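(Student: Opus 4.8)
The plan is to prove both implications directly from the definitions, using the characterization of lower semicontinuous functions as suprema of continuous minorants wherever it helps, though in fact only one direction needs real work. First I would dispose of the easy direction: suppose $f:X\longrightarrow\R$ is bounded from below and lower semicontinuous and set $A:=A_Y(f)=\{\psi\in C_b(X)/\psi\leq f\}$. For each $x\in X$ put $\lambda_x:=f(x)\in\R$ (here I use that $f$ is real-valued, so $\lambda_x$ is finite). Then $\psi\in A$ forces $\psi(x)\leq f(x)=\lambda_x$ for every $x$, hence $A\subseteq\bigcap_{x\in X}\{\psi\in C_b(X)/\varphi(x)\leq\lambda_x\}$. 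Conversely, if $\psi\in C_b(X)$ satisfies $\psi(x)\leq\lambda_x=f(x)$ for all $x\in X$, then $\psi\leq f$, i.e. $\psi\in A$. So $A=\bigcap_{x\in X}\{\psi\in C_b(X)/\psi(x)\leq\lambda_x\}$, which exhibits $A$ as a $\Delta$-set; note $A\neq\emptyset$ because $f$ is bounded below by a constant, which lies in $C_b(X)$.

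For the converse, assume $A$ is a $\Delta$-set, so there are reals $(\lambda_x)_{x\in X}$ with $A=\bigcap_{x\in X}\{\psi\in C_b(X)/\psi(x)\leq\lambda_x\}$, and $A\neq\emptyset$. The natural candidate is to define $f:X\longrightarrow\R\cup\{+\infty\}$ by $f(x):=\sup_{\psi\in A}\psi(x)$, i.e. $f=\sup_{\psi\in A}\psi$. As a supremum of continuous functions, $f$ is lower semicontinuous; and since $A\neq\emptyset$, fixing some $\psi_0\in A$ gives $f\geq\psi_0$, so $f$ is bounded from below (by $\inf_X\psi_0$) and $dom(f)\neq\emptyset$. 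The two things I must still check are that $f$ is everywhere finite (real-valued, not merely extended-real-valued) and that $A_Y(f)=A$. Finiteness of $f$ is immediate from the defining property of the $\Delta$-set: every $\psi\in A$ satisfies $\psi(x)\leq\lambda_x$, hence $f(x)=\sup_{\psi\in A}\psi(x)\leq\lambda_x<+\infty$ for every $x\in X$. So $f:X\longrightarrow\R$ is a bounded-from-below lsc function.

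It remains to show $A_Y(f)=A$. The inclusion $A\subseteq A_Y(f)$ is trivial: if $\psi\in A$ then $\psi\leq\sup_{\chi\in A}\chi=f$, so $\psi\in A_Y(f)$. For the reverse inclusion, let $\psi\in A_Y(f)$, i.e. $\psi\in C_b(X)$ with $\psi\leq f$. I must show $\psi\in A$, i.e. $\psi(x)\leq\lambda_x$ for every $x\in X$; but this is immediate, since $\psi(x)\leq f(x)\leq\lambda_x$ by the bound established in the previous paragraph. Hence $A_Y(f)=A$, completing the proof. The main (very mild) obstacle is simply keeping the bookkeeping straight between the extended-real-valued supremum and the requirement that $f$ be real-valued; the $\Delta$-set condition is precisely what guarantees the finiteness, and no deep tool (not even the property $(H)$ or Theorem \ref{dual1}) is needed for this particular statement.
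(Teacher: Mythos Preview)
Your proof is correct and follows essentially the same route as the paper's: in the nontrivial direction you define $f(x):=\sup_{\psi\in A}\psi(x)$, use the $\Delta$-set bounds $\lambda_x$ to ensure finiteness, and verify the two inclusions $A\subseteq A_Y(f)$ and $A_Y(f)\subseteq A$ exactly as the paper does. Your write-up is somewhat more explicit (especially for the ``if'' direction, which the paper dismisses as clear), but the argument is the same.
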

\begin{proof} Let us prove the "only if part". Since $A$ is $\Delta$-set, there exists real numbers $\lambda_x \in \R$, for all $x\in X$, such that $A=\cap_{x\in X}\lbrace \varphi \in C_b(X): \varphi(x)\leq \lambda_x \rbrace$. Let us set $f(x):= \sup_{\psi \in A } \psi(x)$, for all $x\in X$. Thus, we have $f(x)\leq \lambda_x< +\infty$ for all $x\in X$. It follows that $A_Y(f)\subset A$, that $dom(f)=\R$ and that the function $f$ is lower semicontinuous as supremum of continuous function. It follows also that $f$ is bounded from below, since there exists a bounded continuous function $\varphi \in A$ such that $-\infty < \inf_X \varphi \leq \varphi \leq f$. On the other hand, if $\varphi\in A$, then for all $x\in X$ we have $\varphi(x) \leq \sup_{\psi \in A } \psi(x):=f(x)$. This shows that $\varphi\in A_Y(f)$ and so that $A\subset A_Y(f)$. Hence $A=A_Y(f)$.  The "if part" is clear.

\end{proof} 

 \begin{Prop} \label{Prop2} Let $X$ be a normal Hausdorff space and $Y=C_b(X)$. Let $f : X\longrightarrow \R \cup \lbrace +\infty \rbrace$ be a bounded from below lower semicontinuous function with $dom(f)\neq \emptyset$. Then, for all $Q\in \overline{\textnormal{co}}^{w^*}(\Delta_Y(X))$, we have that 

$$\mathcal{F}(f)(Q):=(f^{\times})^*(Q)=\sup_{\varphi \in A_Y(f)} \langle Q,\varphi \rangle:=\sigma_{A_Y(f)}(Q).$$
\end{Prop}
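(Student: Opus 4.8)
The plan is to unwind the definition of $\mathcal{F}(f)(Q) = (f^\times)^*(Q)$ and show it coincides with the support function $\sigma_{A_Y(f)}$ on $\overline{\textnormal{co}}^{w^*}(\Delta_Y(X))$. First I would establish the easy inequality $\mathcal{F}(f)(Q) \geq \sigma_{A_Y(f)}(Q)$: if $\varphi \in A_Y(f)$, then $\varphi \leq f$, so $f^\times(\varphi) = \sup_{x\in X}\{\varphi(x) - f(x)\} \leq 0$, whence $\langle Q,\varphi\rangle - f^\times(\varphi) \geq \langle Q,\varphi\rangle$; taking the supremum over $\varphi \in A_Y(f)$ on the left (it is dominated by the supremum over all $\varphi \in Y$, which is $\mathcal{F}(f)(Q)$) gives the claim. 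Note this direction does not even use $Q \in \overline{\textnormal{co}}^{w^*}(\Delta_Y(X))$.

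The reverse inequality $\mathcal{F}(f)(Q) \leq \sigma_{A_Y(f)}(Q)$ is the substantive part, and here I would exploit Lemma \ref{lem2}. Fix $Q \in \overline{\textnormal{co}}^{w^*}(\Delta_Y(X))$ and an arbitrary $\psi \in Y$; I must bound $\langle Q,\psi\rangle - f^\times(\psi)$ by $\sigma_{A_Y(f)}(Q)$. By Lemma \ref{lem2}, $f^\times(\psi) = \inf_{\varphi\in A_Y(f)}\varphi^\times(\psi)$, so for every $\varepsilon > 0$ there is $\varphi \in A_Y(f)$ with $\varphi^\times(\psi) \leq f^\times(\psi) + \varepsilon$, i.e.
$$\langle Q,\psi\rangle - f^\times(\psi) \leq \langle Q,\psi\rangle - \varphi^\times(\psi) + \varepsilon \leq \mathcal{F}(\varphi)(Q) + \varepsilon = \langle Q,\varphi\rangle + \varepsilon,$$
where the middle step is the trivial bound $\langle Q,\psi\rangle - \varphi^\times(\psi) \leq \sup_{\eta\in Y}\{\langle Q,\eta\rangle - \varphi^\times(\eta)\} = \mathcal{F}(\varphi)(Q)$, and the last equality is part $(3)$ of Lemma \ref{Prop1} together with the fact that $Q \in \overline{\textnormal{co}}^{w^*}(\Delta_Y(X))$ kills the indicator term, so $\mathcal{F}(\varphi)(Q) = \hat\varphi(Q) = \langle Q,\varphi\rangle$. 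Since $\langle Q,\varphi\rangle \leq \sigma_{A_Y(f)}(Q)$ and $\varepsilon > 0$ is arbitrary, taking the supremum over $\psi \in Y$ yields $\mathcal{F}(f)(Q) \leq \sigma_{A_Y(f)}(Q)$.

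The main obstacle, and the reason the hypothesis that $X$ be normal Hausdorff is needed, is precisely the reduction $f^\times(\psi) = \inf_{\varphi\in A_Y(f)}\varphi^\times(\psi)$ from Lemma \ref{lem2}, whose proof rests on the minimax theorem and, further upstream, on the Kat\v{e}tov--Tong insertion theorem used in Lemma \ref{lem1} — so normality is doing real work there rather than in this proposition directly. One should also double-check that $A_Y(f) \neq \emptyset$ so that the infimum in Lemma \ref{lem2} is over a nonempty set and $\sigma_{A_Y(f)}$ is not identically $-\infty$; this follows because $f$ is bounded from below, so the constant function equal to $\inf_X f$ lies in $A_Y(f)$. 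With both inequalities in hand the proposition follows.
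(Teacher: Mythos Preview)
Your proof is correct and follows essentially the same approach as the paper: both rely on Lemma~\ref{lem2} to write $f^{\times}=\inf_{\varphi\in A_Y(f)}\varphi^{\times}$ and on part~(3) of Lemma~\ref{Prop1} to identify $\mathcal{F}(\varphi)(Q)$ with $\langle Q,\varphi\rangle$ on $\overline{\textnormal{co}}^{w^*}(\Delta_Y(X))$. The only cosmetic difference is that the paper compresses your two-inequality argument into the single identity $(\inf_{\varphi}\varphi^{\times})^{*}=\sup_{\varphi}(\varphi^{\times})^{*}$, which immediately gives $\mathcal{F}(f)=\sup_{\varphi\in A_Y(f)}\mathcal{F}(\varphi)$ without splitting into $\geq$ and $\leq$ or introducing an $\varepsilon$; your version just unwinds that standard Fenchel fact by hand.
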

\begin{proof} From Lemma \ref{lem2}, we have that $f^{\times}(\xi)=\inf_{\varphi\in A_Y(f)} \varphi^{\times}(\xi)$, for all $\xi \in C_b(X)$. So, by applying the Fenchel conjugacy to $f^{\times}$, we get $(f^{\times})^*(Q)=\sup_{\varphi\in A_Y(f)} (\varphi^{\times})^*(Q)$ for all $Q\in (C_b(X))^*$. Hence, $\mathcal{F}(f)(Q)=\sup_{\varphi\in A_Y(f)} \mathcal{F}(\varphi)(Q)$ for all $Q\in (C_b(X))^*$. Using the part $(3)$ of Lemma \ref{Prop1}, we obtain that  $\mathcal{F}(f)(Q)=\sup_{\varphi\in A_Y(f)} \langle Q, \varphi \rangle$, for all $Q\in \overline{\textnormal{co}}^{w^*}(\Delta_Y(X))$.

\end{proof}

\vskip5mm

\begin{proof}[Proof of Theorem \ref{Iso-cone}] First, we define $\mathcal{T}$ as follows: for all $f\in SCI(X)$, $$\mathcal{T}(f):=\mathcal{F}(f)_{|\overline{\textnormal{co}}^{w^*}(\Delta_Y(X))},$$ the restriction of $\mathcal{F}(f)$ to ${\overline{\textnormal{co}}^{w^*}(\Delta_Y(X))}$. From Proposition \ref{Prop2}, we have that $\mathcal{T}(f) \in \Gamma(\overline{\textnormal{co}}^{w^*}(\Delta(X)))$ for all $f\in SCI(X)$. Let us prove that $\mathcal{T}: SCI(X)\longrightarrow \Gamma(\overline{\textnormal{co}}^{w^*}(\Delta(X)))$ is a bijective map. Indeed, using the part $(1)$ of Lemma \ref{Prop1}, we get that $\mathcal{T}$ is one to one. To see that $\mathcal{T}$ is onto, let $g \in \Gamma(\overline{\textnormal{co}}^{w^*}(\Delta(X)))$, there exists a $\Delta$-set $A$ such that $g= (\sigma_A)_{|\overline{\textnormal{co}}^{w^*}(\Delta(X))}$. Using Proposition \ref{Delta-convex}, there exist a bounded from below lower semicontinuous function $f: X\longrightarrow \R$ such that $A=A_Y(f):=\lbrace \psi\in C_b(X)/ \psi \leq f\rbrace$. Thus, by using Proposition \ref{Prop2}, we get that $g=\mathcal{T}(f)$ i.e. $\mathcal{T}$ is onto. Hence, $\mathcal{T}$ is a bijective map. Now, we prove that for all $f, g \in SCI(X)$ and all $\alpha,\beta \in \R^+$, we have $$\mathcal{T}(\alpha f+\beta g)=\alpha\mathcal{T}(f)+ \beta \mathcal{T}(g).$$

Indeed, let $\alpha\in \R^+$. If $\alpha=0$, then from the part $(2)$ of Lemma \ref{Prop1}, we have that $\mathcal{T}(0)= \mathcal{F}(0)=0$ on $\overline{\textnormal{co}}^{w^*}(\Delta_Y(X))$. If $\alpha\neq 0$, it is easy to see that $(\alpha f)^{\times}(\varphi)=\alpha f^{\times}(\frac{\varphi}{\alpha})$ for all $f\in SCI(X)$. Thus, $((\alpha f)^{\times})^*=\alpha (f^{\times})^*$ which implies that $\mathcal{T}(\alpha f)=\alpha \mathcal{T}(f)$ for all $f\in SCI(X)$. On the other hand, if $f,g \in SCI(X)$, then by applying Theorem \ref{inf-conv}, we get that $(f+g)^{\times}=f^{\times}\diamond g^{\times}$. Hence by the properties of the Fenchel conjugacy, $((f+g)^{\times})^*=(f^{\times}\diamond g^{\times})^*=(f^{\times})^* + (g^{\times})^*$. In other words, we have that $\mathcal{T}(f+g)=\mathcal{T}(f)+\mathcal{T}(g)$. Thus, $\mathcal{T}(\alpha f + \beta g)=\alpha \mathcal{T}(f)+ \beta \mathcal{T}(g)$, for all $f, g \in SCI(X)$ and all $\alpha, \beta \in \R^+$. It follows from this formula that the set $\Gamma(\overline{\textnormal{co}}^{w^*}(\Delta(X)))$ is a convex cone and that $\mathcal{T}$ is an isomorphism of convex cone.

\vskip5mm

The parts $(1)$, $(3)$ and $(4)$ of the theorem, follows repectively from the parts $(1)$, $(3)$ and $(4)$ of Lemma \ref{Prop1}. Now, we prove the part $(2)$ of the theorem. Let $f,g\in SCI(X)$. If $f\leq g$, then we see easily from the definition that $g^{\times} \leq f^{\times}$. Also from the definition of the Fenchel conjugacy we get that $(f^{\times})^* \leq (g^{\times})^*$ which implies that $\mathcal{T}(f)\leq \mathcal{T}(g)$. Now, suppose that $\mathcal{T}(f)\leq \mathcal{T}(g)$. Since $dom(\mathcal{F}(f))\subset \overline{\textnormal{co}}^{w^*}(\Delta_Y(X))$ (see the part $(1)$ of Lemma \ref{Prop1}), we also have that $\mathcal{F}(f) \leq \mathcal{F}(g)$ i.e. $(f^{\times})^*\leq (g^{\times})^*$. This implies that $(g^{\times})^{**}\leq (f^{\times})^{**}$. Since $f^{\times}$ and $g^{\times}$ are convex and $1$-Lipschitz continuous functions, using the classical Fenchel-Moreau theorem, we obtain that $g^{\times}\leq f^{\times}$. This implies that $f^{\times\times}\leq g^{\times\times}$ and so by applying Theorem \ref{dual1}, we get that $f\leq g$.    

\end{proof}

\bibliographystyle{amsplain}

\end{document}